\documentclass[a4paper,12pt]{amsart}

\usepackage{amsmath}
\usepackage{amssymb}
\usepackage{mathrsfs}
\usepackage{enumerate}
\usepackage{ifthen}
\usepackage{graphicx}
\usepackage[T1]{fontenc} 

\setlength{\topmargin}{-0.02in}
\setlength{\textheight}{9.2in} 


\nonstopmode \numberwithin{equation}{section}
\setlength{\textwidth}{15cm} \setlength{\oddsidemargin}{0cm}
\setlength{\evensidemargin}{0cm} \setlength{\footskip}{40pt}
\pagestyle{plain}

\newtheorem{thm}{Theorem}[section]
\newtheorem{cor}{Corollary}[section]
\newtheorem{lem}{Lemma}[section]
\newtheorem{prop}{Proposition}[section]

\newtheorem{conj}{Conjecture}

\theoremstyle{definition}

\newtheorem{example}{Example}[section]
\newtheorem{prob}{Problem}[section]
\newtheorem{rem}{Remark}[section]


\newcounter{minutes}\setcounter{minutes}{\time}
\divide\time by 60
\newcounter{hours}\setcounter{hours}{\time}
\multiply\time by 60
\addtocounter{minutes}{-\time}

\newcounter {own}
\def\theown {\thesection       .\arabic{own}}

\newenvironment{pf}[1][]{%
 \vskip 3mm
 \noindent
 \ifthenelse{\equal{#1}{}}%
  {{\slshape Proof. }}%
  {{\slshape #1.} }%
 }%
{\qed\bigskip}

\newcounter{alphabet}
\newcounter{tmp}




\def\be{\begin{equation}}
\def\ee{\end{equation}}

\newcommand{\bee}{\begin{enumerate}}
\newcommand{\eee}{\end{enumerate}}

\newcommand{\blem}{\begin{lem}}
\newcommand{\elem}{\end{lem}}
\newcommand{\bthm}{\begin{thm}}
\newcommand{\ethm}{\end{thm}}
\newcommand{\bcor}{\begin{cor}}
\newcommand{\ecor}{\end{cor}}
\newcommand{\beg}{\begin{examp}}
\newcommand{\eeg}{\end{examp}}
\newcommand{\begs}{\begin{examples}}
\newcommand{\eegs}{\end{examples}}
\newcommand{\bdefe}{\begin{defin}}
\newcommand{\edefe}{\end{defin}}
\newcommand{\bprob}{\begin{prob}}
\newcommand{\eprob}{\end{prob}}
\newcommand{\bei}{\begin{itemize}}
\newcommand{\eei}{\end{itemize}}

\newcommand{\bcon}{\begin{conj}}
\newcommand{\econ}{\end{conj}}
\newcommand{\bcons}{\begin{conjs}}
\newcommand{\econs}{\end{conjs}}
\newcommand{\bprop}{\begin{prop}}
\newcommand{\eprop}{\end{prop}}
\newcommand{\br}{\begin{rem}}
\newcommand{\er}{\end{rem}}
\newcommand{\brs}{\begin{rems}}
\newcommand{\ers}{\end{rems}}
\newcommand{\bo}{\begin{obser}}
\newcommand{\eo}{\end{obser}}
\newcommand{\bos}{\begin{obsers}}
\newcommand{\eos}{\end{obsers}}
\newcommand{\bpf}{\begin{pf}}
\newcommand{\epf}{\end{pf}}
\newcommand{\ba}{\begin{array}}
\newcommand{\ea}{\end{array}}
\newcommand{\beq}{\begin{eqnarray}}
\newcommand{\beqq}{\begin{eqnarray*}}
\newcommand{\eeq}{\end{eqnarray}}
\newcommand{\eeqq}{\end{eqnarray*}}

\newcommand{\ds}{\displaystyle}


\begin{document}

\title{Toeplitz determinants whose elements are the coefficients of univalent functions}

\author{Md Firoz Ali}
\address{Md Firoz Ali,
Department of Mathematics,
Indian Institute of Technology Kharagpur,
Kharagpur-721 302, West Bengal, India.}
\email{ali.firoz89@gmail.com}

\author{D. K. Thomas}
\address{D. K. Thomas,
Department of Mathematics,
Swansea University, Singleton Park,
Swansea, SA2 8PP, United Kingdom.}
\email{d.k.thomas@swansea.ac.uk}

\author{A. Vasudevarao}
\address{A. Vasudevarao,
Department of Mathematics,
Indian Institute of Technology Kharagpur,
Kharagpur-721 302, West Bengal, India.}
\email{alluvasu@maths.iitkgp.ernet.in}

\subjclass[2010]{Primary 30C45, 30C55}
\keywords{univalent functions, starlike functions, convex functions, close-to-convex function, typically real function, Toeplitz determinant.}

\def\thefootnote{}
\footnotetext{ {\tiny File:~\jobname.tex,
printed: \number\year-\number\month-\number\day,
          \thehours.\ifnum\theminutes<10{0}\fi\theminutes }
} \makeatletter\def\thefootnote{\@arabic\c@footnote}\makeatother

\begin{abstract}
Let $\mathcal{S}$ denote the class of analytic and univalent functions in $\mathbb{D}:=\{z\in\mathbb{C}:\, |z|<1\}$ of the form $f(z)= z+\sum_{n=2}^{\infty}a_n z^n$. In this paper, we determine  sharp estimates for the Toeplitz determinants whose elements are the Taylor coefficients of functions in $\mathcal{S}$ and its certain subclasses. We also discuss similar problems for typically real functions.
\end{abstract}

\thanks{}

\maketitle
\pagestyle{myheadings}
\markboth{Md Firoz Ali, D. K. Thomas and A. Vasudevarao}{Toeplitz determinant}

\section{Introduction and Preliminaries}

Let $\mathcal{H}$ denote the space of analytic functions in the unit disk $\mathbb{D}:=\{z\in\mathbb{C}:\, |z|<1\}$ and $\mathcal{A}$ denote the class of functions $f$ in $\mathcal{H}$ with Taylor series
\begin{equation}\label{p-001}
f(z)= z+\sum_{n=2}^{\infty}a_n z^n.
\end{equation}
The subclass $\mathcal{S}$ of $\mathcal{A}$, consisting of univalent (i.e., one-to-one) functions  has attracted much interest for over a century, and is a central area of research in Complex Analysis. A function $f\in\mathcal{A}$ is called starlike if $f(\mathbb{D})$ is starlike with respect to the origin i.e., $tf(z)\in f(\mathbb{D})$ for every $0\le t\le 1$. Let $\mathcal{S}^*$ denote the class of starlike functions in $\mathcal{S}$. It is well-known that a function $f\in\mathcal{A}$ is starlike if, and only if,
\begin{equation*}\label{p-010}
{\rm Re\,}\left(\frac{zf'(z)}{f(z)}\right)>0, \quad z\in\mathbb{D}.
\end{equation*}
An important member of the class $\mathcal{S}^*$ as well as of the class $\mathcal{S}$ is the Koebe function $k$ defined by $k(z)=z/(1-z)^2$. This function plays the role of extremal function in most of the problems for the classes $\mathcal{S}^*$ and $\mathcal{S}$.
\bigskip

A function $f\in\mathcal{A}$ is called convex if $f(\mathbb{D})$ is a convex domain. Let $\mathcal{C}$ denote the class of convex functions in $\mathcal{S}$. It is well-known that a function $f\in\mathcal{A}$ is in $\mathcal{C}$ if, and only if,
\begin{equation*}\label{p-015}
{\rm Re\,}\left(1+\frac{zf''(z)}{f'(z)}\right)>0, \quad z\in\mathbb{D}.
\end{equation*}
From the above it is easy to see that $f\in\mathcal{C}$ if, and only if, $zf'\in\mathcal{S}^*$.
\bigskip

A function $f\in\mathcal{A}$ is said to be close-to-convex if there exists a starlike function $g\in\mathcal{S}^*$ and a real number $\alpha\in(-\pi/2,\pi/2)$, such that
\begin{equation}\label{p-020}
{\rm Re\,} \left(e^{i\alpha}\frac{zf'(z)}{g(z)}\right)>0, \quad z\in\mathbb{D}.
\end{equation}
Let $\mathcal{K}$ denote the class of all close-to-convex functions. It is well-known that every close-to-convex function is univalent in $\mathbb{D}$ (see \cite{Duren-book}). Geometrically, $f\in\mathcal{K}$ means that the complement of the image-domain $f(\mathbb{D})$ is the union of rays that are disjoint (except that the origin of one ray may lie on another one of the rays).
\bigskip

Let $\mathcal{R}$ denote class of functions $f$ in $\mathcal{A}$ satisfying ${\rm Re\,} f'(z)>0$ in $\mathbb{D}$. It is well-known that functions in $\mathcal{R}$ are close-to-convex, and hence univalent. Functions in $\mathcal{R}$ are sometimes called functions of bounded boundary rotation.
\bigskip

A function $f$  satisfiying the condition $({\rm Im\,}z) ({\rm Im\,}f(z))\ge0$ for $z\in\mathbb{D}$ is called a typically real. Let $\mathcal{T}$ denote the class of all typically real functions. Robertson \cite{Robertson-1935} proved that $f\in\mathcal{T}$ if, and only if, there exists a probability measure $\mu$ on $[-1, 1]$ such that
\begin{equation*}\label{p-200}
f(z)=\int_{-1}^{1} k(z,t)\,d\mu(t),
\end{equation*}
where
\begin{equation*}\label{p-205}
k(z,t)=\frac{z}{1-2tz+z^2}, \quad z\in\mathbb{D},\quad t\in[-1,1].
\end{equation*}
\bigskip

Hankel matrices and determinants play an important role in several branches of mathematics, and have many applications \cite{Ye-Lim-2016}. The Toeplitz determinants are closely related to Hankel determinants. Hankel matrices have constant entries along the reverse diagonal, whereas Toeplitz matrices have constant entries along the diagonal. For a good summary of the applications of Toeplitz matrices to the  wide range of areas of pure and applied mathematics, we refer to \cite{Ye-Lim-2016}. Recently, Thomas and Halim \cite{Thomas-Halim-2017} introduced the concept of the symmetric Toeplitz determinant for  analytic functions $f$ of the form (\ref{p-001}), and defined the symmetric Toeplitz determinant $T_q(n)$ as follows
$$
T_q(n):=
\begin{vmatrix}
a_n & a_{n+1} & \cdots & a_{n+q-1}\\
a_{n+1} & a_n & \cdots & a_{n+q-2}\\
\vdots & \vdots & \vdots & \vdots &\\
a_{n+q-1} & a_{n+q-2} & \cdots & a_{n}
\end{vmatrix}
$$
where $n,q= 1,2,3\ldots$ with $a_1=1$. In particular,
$$
T_2(2)=
\begin{vmatrix}
a_2 & a_3\\
a_3 & a_2
\end{vmatrix},
\quad
T_2(3)=
\begin{vmatrix}
a_3 & a_4\\
a_4 & a_3
\end{vmatrix},
\quad
T_3(1)=
\begin{vmatrix}
1 & a_2 & a_3\\
a_2 & 1 & a_2\\
a_3 & a_2 & 1
\end{vmatrix}\\,
\quad
T_3(2)=
\begin{vmatrix}
a_2 & a_3 & a_4\\
a_3 & a_2 & a_3\\
a_4 & a_3 & a_2
\end{vmatrix}.
$$

For small  values of $n$ and $q$,  estimates of the Toeplitz determinant $|T_q(n)|$ for functions in $\mathcal{S}^*$ and $\mathcal{K}$ have been studied in \cite{Thomas-Halim-2017}. Similarly,  estimates of the Toeplitz determinant $|T_q(n)|$ for functions in $\mathcal{R}$ have been studied in \cite{Radhika-Sivasubramanian-Murugusundaramoorthy-Jahangiri-2016}, when $n$ and $q$ are small.
Apart from \cite{Radhika-Sivasubramanian-Murugusundaramoorthy-Jahangiri-2016} and \cite{Thomas-Halim-2017}, there appears to be little in the literature  concerning  estimates of   Toeplitz determinants. In both
\cite{Radhika-Sivasubramanian-Murugusundaramoorthy-Jahangiri-2016, Thomas-Halim-2017}  we observe an invalid assumption  in the proofs. It is the purpose of this paper to give  estimates  for Toeplitz determinants $T_q(n)$ for functions in $\mathcal{S}$, $\mathcal{S}^*$, $\mathcal{C}$, $\mathcal{K}$,  $\mathcal{R}$, and $\mathcal{T}$, when $n$ and $q$ are small.

\bigskip

Let $\mathcal{P}$ denote the class of analytic functions $p$ in $\mathbb{D}$ of the form
\begin{equation}\label{p-030}
p(z)= 1+\sum_{n=1}^{\infty}c_n z^n
\end{equation}
such that  ${\rm\, Re\,} p(z)>0$ in $\mathbb{D}$. Functions in $\mathcal{P}$ are sometimes called Carath\'{e}odory functions. To prove our main results, we need some preliminary results for functions in $\mathcal{P}$.

\begin{lem}\cite[p. 41]{Duren-book}\label{p-lemma001}
For a function $p\in\mathcal{P}$ of the form (\ref{p-030}), the sharp inequality $|c_n|\le 2$ holds for each $n\ge 1$. Equality holds for the function $p(z)=(1+z)/(1-z)$.
\end{lem}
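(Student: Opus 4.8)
The plan is to bound the Taylor coefficients directly from the defining positivity condition $\operatorname{Re} p(z)>0$, by recovering $c_n$ as a Fourier coefficient of $\operatorname{Re} p$ on the circles $|z|=r<1$. Fix $r\in(0,1)$. Since $p$ is analytic in $\mathbb{D}$, writing $p(re^{i\theta})=\sum_{m\ge 0}c_m r^m e^{im\theta}$ (with $c_0=1$) and integrating term by term---legitimate because the series converges uniformly on $|z|=r$---one gets, for each $n\ge 1$,
\begin{equation*}
c_n r^n=\frac{1}{2\pi}\int_{-\pi}^{\pi}p(re^{i\theta})\,e^{-in\theta}\,d\theta
\qquad\text{and}\qquad
\frac{1}{2\pi}\int_{-\pi}^{\pi}\overline{p(re^{i\theta})}\,e^{-in\theta}\,d\theta=0,
\end{equation*}
the second identity holding because $\overline{p(re^{i\theta})}$ has only nonpositive frequencies. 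Adding these and using $p+\overline p=2\operatorname{Re}p$ yields the key identity
\begin{equation*}
c_n r^n=\frac{1}{\pi}\int_{-\pi}^{\pi}\operatorname{Re}p(re^{i\theta})\,e^{-in\theta}\,d\theta .
\end{equation*}

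Next I would exploit positivity. Since $\operatorname{Re}p(re^{i\theta})>0$ by hypothesis, the triangle inequality and the mean value property of the harmonic function $\operatorname{Re}p$ give
\begin{equation*}
|c_n|\,r^n\le\frac{1}{\pi}\int_{-\pi}^{\pi}\bigl|\operatorname{Re}p(re^{i\theta})\bigr|\,d\theta
=\frac{1}{\pi}\int_{-\pi}^{\pi}\operatorname{Re}p(re^{i\theta})\,d\theta
=2\,\operatorname{Re}p(0)=2 .
\end{equation*}
Letting $r\to 1^-$ establishes $|c_n|\le 2$ for every $n\ge 1$. For sharpness one simply checks $p_0(z)=(1+z)/(1-z)$: it satisfies $\operatorname{Re}p_0(z)>0$ in $\mathbb{D}$ and has the expansion $p_0(z)=1+2\sum_{n\ge 1}z^n$, so $c_n=2$ for all $n$.

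As an alternative one could start from the Riesz--Herglotz representation $p(z)=\int\frac{1+ze^{-it}}{1-ze^{-it}}\,d\mu(t)$ with $\mu$ a probability measure on $[-\pi,\pi]$; expanding the kernel gives $c_n=2\int e^{-int}\,d\mu(t)$, whence $|c_n|\le 2$ at once, with equality precisely when $\mu$ is a unit point mass. Either route is short; there is no genuine obstacle, the only points needing care being the justification of term-by-term integration on $|z|=r$ and the passage to the limit $r\to 1$, both of which are routine.
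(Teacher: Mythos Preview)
Your proof is correct. The paper does not supply its own proof of this lemma; it is quoted with a citation to Duren's book, so there is nothing to compare against beyond noting that both of your arguments---the direct Fourier-coefficient estimate and the Riesz--Herglotz alternative---are standard and complete.
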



\begin{lem}\cite[Theorem 1]{Efraimidis-2016}\label{p-lemma010}
Let $p\in\mathcal{P}$ be of the form (\ref{p-030}) and $\mu\in\mathbb{C}$. Then
$$
|c_n-\mu c_kc_{n-k}|\le 2\max\{1,|2\mu-1|\}, \quad 1\le k\le n-1.
$$
If $|2\mu-1|\ge1$ then the inequality is sharp for the function $p(z)=(1+z)/(1-z)$ or its rotations. If $|2\mu-1|<1$ then the inequality is sharp for the function $p(z)=(1+z^n)/(1-z^n)$ or its rotations.
\end{lem}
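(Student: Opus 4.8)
The plan is to reduce the estimate to a single $3\times 3$ positive semidefinite matrix built from the three coefficients $c_k$, $c_{n-k}$, $c_n$; this matrix will play the role that the classical relation $2c_2=c_1^2+(4-c_1^2)x$, $|x|\le 1$, plays in the familiar case $n=2$, $k=1$. Since $\real p>0$ on $\mathbb{D}$, the principal minor on the index set $\{0,k,n\}$ (three distinct indices, as $1\le k\le n-1$) of the Carath\'eodory--Toeplitz matrix of $p$ is positive semidefinite; concretely, for every $r\in(0,1)$ and all $\lambda_0,\lambda_1,\lambda_2\in\mathbb{C}$ the quantity $\frac1{2\pi}\int_0^{2\pi}|\lambda_0+\lambda_1 e^{ik\theta}+\lambda_2 e^{in\theta}|^2\,\real p(re^{i\theta})\,d\theta$ is nonnegative, and evaluating it and letting $r\to 1^-$ gives
$$
M:=\begin{pmatrix} 2 & c_k & c_n\\ \overline{c_k} & 2 & c_{n-k}\\ \overline{c_n} & \overline{c_{n-k}} & 2\end{pmatrix}\succeq 0 .
$$
In particular $\det M\ge 0$, which after simplification is the scalar inequality
$$
|c_k|^2+|c_{n-k}|^2+|c_n|^2-\real(c_k c_{n-k}\overline{c_n})\le 4 .
$$

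From this point the argument is elementary. Completing the square in $c_n$ and using $(4-s)(4-t)=16-4s-4t+st$, the last inequality becomes the key intermediate estimate
$$
|c_n-\tfrac12 c_k c_{n-k}|^2\le\tfrac14(4-|c_k|^2)(4-|c_{n-k}|^2),
$$
both right-hand factors being nonnegative by Lemma~\ref{p-lemma001}. Next I would write $c_n-\mu c_k c_{n-k}=(\tfrac12-\mu)c_k c_{n-k}+(c_n-\tfrac12 c_k c_{n-k})$ and set $u=|c_k|$, $v=|c_{n-k}|$ (so $u,v\in[0,2]$) and $\lambda=|2\mu-1|$; the triangle inequality then gives
$$
|c_n-\mu c_k c_{n-k}|\le\tfrac12\bigl(\lambda uv+\sqrt{(4-u^2)(4-v^2)}\bigr).
$$
Applying the Cauchy--Schwarz inequality to the vectors $(\lambda u,\sqrt{4-u^2})$ and $(v,\sqrt{4-v^2})$ bounds $\lambda uv+\sqrt{(4-u^2)(4-v^2)}$ by $2\sqrt{4+(\lambda^2-1)u^2}$, which is at most $4$ if $\lambda\le 1$ and, using $u^2\le 4$, at most $4\lambda$ if $\lambda\ge 1$; dividing by $2$ yields $|c_n-\mu c_k c_{n-k}|\le 2\max\{1,|2\mu-1|\}$.

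It remains to verify sharpness. If $|2\mu-1|\ge 1$, take $p(z)=(1+z)/(1-z)$, so that $c_m=2$ for all $m$ and $c_n-\mu c_k c_{n-k}=2-4\mu$ has modulus $2|2\mu-1|$; the rotations $p(z)=(1+\eta z)/(1-\eta z)$, $|\eta|=1$, behave identically. If $|2\mu-1|<1$, take $p(z)=(1+z^n)/(1-z^n)$; then $c_n=2$ while $c_k=c_{n-k}=0$, because $1\le k\le n-1$ forces $n\nmid k$ and $n\nmid(n-k)$, so that $c_n-\mu c_k c_{n-k}=2$, and again the rotations behave the same way. The one genuinely nontrivial point is the opening reduction: recognizing that the correct finite-dimensional substitute for the $n=2$ coefficient relation is the $3\times 3$ Toeplitz minor on $\{0,k,n\}$; after that, every step is a routine computation or a standard inequality.
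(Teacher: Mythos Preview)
Your argument is correct. The $3\times 3$ principal minor of the Carath\'eodory--Toeplitz matrix on the index set $\{0,k,n\}$ is indeed positive semidefinite, the determinant computation and the completion of the square yielding
\[
\bigl|c_n-\tfrac12 c_kc_{n-k}\bigr|^2\le\tfrac14\,(4-|c_k|^2)(4-|c_{n-k}|^2)
\]
are accurate, and the subsequent triangle inequality/Cauchy--Schwarz optimisation in $u=|c_k|$, $v=|c_{n-k}|$ is carried out correctly, as are the two sharpness checks.

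There is, however, nothing in the paper to compare your proof with: the lemma is not proved here but simply quoted from \cite{Efraimidis-2016} (this is the ``\cite[Theorem 1]{Efraimidis-2016}'' tag on the statement). What you have written is in fact essentially the argument given in that reference---the same positive-semidefinite $3\times3$ minor leading to the same Livingston-type intermediate inequality---so you have independently reconstructed the source proof rather than found an alternative route.
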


\section{Main Results}

\begin{thm}\label{theorem-001}
Let $f\in\mathcal{S}$ be of the form (\ref{p-001}). Then
\begin{enumerate}[(i)]
\item $\displaystyle |T_2(n)|=|a_n^2-a_{n+1}^2|\le 2n^2+2n+1$ for $n\ge2$,\\[-2mm]

\item $\displaystyle |T_3(1)|\le 24$.
\end{enumerate}
Both  inequalities  are sharp.
\end{thm}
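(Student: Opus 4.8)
The plan is to reduce everything to the known coefficient bounds for $\mathcal{S}$ coming from de Branges' theorem, namely $|a_n|\le n$ for all $n\ge 2$ (together with $a_1=1$). For part (i), I would factor the symmetric $2\times 2$ determinant as
\[
T_2(n)=a_n^2-a_{n+1}^2=(a_n-a_{n+1})(a_n+a_{n+1}),
\]
but this product form does not obviously help because the factors are not independently controlled. Instead I would simply estimate directly:
\[
|T_2(n)|=|a_n^2-a_{n+1}^2|\le |a_n|^2+|a_{n+1}|^2\le n^2+(n+1)^2=2n^2+2n+1 .
\]
The only thing to check is sharpness. Here the Koebe function $k(z)=z/(1-z)^2=\sum_{n\ge 1} n z^n$ gives $a_n=n$, $a_{n+1}=n+1$, so $a_n^2-a_{n+1}^2=n^2-(n+1)^2=-(2n+1)$, which has modulus $2n+1$, not $2n^2+2n+1$. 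So the Koebe function is not extremal, and I must produce a function in $\mathcal{S}$ with $a_n^2$ and $a_{n+1}^2$ of opposite sign and both of maximal modulus. The natural candidate is a rotation/variant of Koebe: take $f(z)=k(iz)/i=z/(1+z^2)^{?}$—more precisely one wants $a_n=\pm n$ with alternating-in-the-right-way signs. Consider $f_\theta(z)=e^{-i\theta}k(e^{i\theta}z)=\sum_{n\ge1} n\,e^{i(n-1)\theta} z^n\in\mathcal{S}$; then $a_n^2-a_{n+1}^2=n^2 e^{2i(n-1)\theta}-(n+1)^2 e^{2in\theta}$, and choosing $e^{2i\theta}=-1$ (i.e.\ $\theta=\pi/2$) makes the two terms $n^2(-1)^{n-1}$ and $(n+1)^2(-1)^{n}$, which add in modulus to $n^2+(n+1)^2=2n^2+2n+1$. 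Thus $f(z)=-ik(iz)=z/(1+z^2)^2\cdot(\text{sign bookkeeping})$ is extremal; I would write this out cleanly and verify it lies in $\mathcal{S}$ (it is a rotation of the Koebe function).

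For part (ii), expand
\[
T_3(1)=\begin{vmatrix}1 & a_2 & a_3\\ a_2 & 1 & a_2\\ a_3 & a_2 & 1\end{vmatrix}
=(1-a_3)\bigl(1+a_3-2a_2^2\bigr),
\]
using cofactor expansion along the first row and simplifying. Now apply $|a_2|\le 2$ and $|a_3|\le 3$:
\[
|T_3(1)|\le |1-a_3|\,|1+a_3-2a_2^2|\le (1+|a_3|)\bigl(1+|a_3|+2|a_2|^2\bigr)\le 4\cdot 12=48,
\]
which is too weak—it gives $48$, not $24$. So a cruder triangle-inequality bound does not suffice, and the real work is here: one must exploit that $|a_2|$ and $|a_3|$ cannot simultaneously be extremal with the phases needed. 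The right approach is to use the Fekete–Szegő-type inequality for $\mathcal{S}$: for $f\in\mathcal{S}$ one has sharp bounds on $|a_3-\lambda a_2^2|$ (classically $|a_3-a_2^2|\le 1$ for the "$\lambda=1$" case, and more generally Golusin/Jenkins estimates). Writing $1+a_3-2a_2^2 = 2(a_3-a_2^2)+(1-a_3)$ lets me bound $|1+a_3-2a_2^2|\le 2|a_3-a_2^2|+|1-a_3|\le 2\cdot 1 + (1+3)=6$, and then $|T_3(1)|\le |1-a_3|\cdot 6\le 4\cdot 6=24$. That is the bound. The main obstacle is precisely to identify this grouping and to invoke the correct sharp Fekete–Szegő estimate $|a_3-a_2^2|\le 1$ on $\mathcal{S}$ (rotations of Koebe are extremal for it), then to check compatibility of the equality cases: equality requires $|a_3|=3$ with $a_3$ real negative (for $|1-a_3|=4$) and simultaneously $a_3-a_2^2$ of modulus $1$ aligned in phase with $1-a_3$—again the rotated Koebe $f(z)=z/(1+z^2)^2$ type function (with $a_2=0$, $a_3=-3$, giving $a_3-a_2^2=-3$, $|a_3-a_2^2|$... wait, that is $3\ne1$). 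So the equality analysis is delicate: I expect the true extremal to be a Koebe rotation $e^{-i\theta}k(e^{i\theta}z)$ with $a_2=2e^{i\theta}$, $a_3=3e^{2i\theta}$, $a_3-a_2^2=3e^{2i\theta}-4e^{2i\theta}=-e^{2i\theta}$, so $|a_3-a_2^2|=1$ holds with equality, and then $T_3(1)=(1-3e^{2i\theta})(1+3e^{2i\theta}-8e^{2i\theta})=(1-3e^{2i\theta})(1-5e^{2i\theta})$; choosing $e^{2i\theta}=-1$ gives $(4)(6)=24$. Good—so $\theta=\pi/2$, i.e.\ $f(z)=-ik(iz)$, is extremal for both parts simultaneously, and I would present the computation $|T_3(1)|\le 24$ via the decomposition above followed by this explicit sharpness check.
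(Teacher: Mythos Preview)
Your argument is correct. Part (i) is identical to the paper's proof: triangle inequality plus de Branges, with the extremal function being the rotation $f(z)=e^{-i\pi/2}k(e^{i\pi/2}z)=z/(1-iz)^2$, exactly as the paper writes it.

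For part (ii) you take a genuinely different route. The paper does not factor $T_3(1)$; instead it groups
\[
|T_3(1)|=|1-2a_2^2+2a_2^2a_3-a_3^2|\le 1+2|a_2|^2+|a_3|\,|a_3-2a_2^2|\le 1+8+3\cdot 5=24,
\]
invoking the Fekete--Szeg\"o inequality at $\mu=2$, namely $|a_3-2a_2^2|\le 5$. You instead exploit the exact factorization $T_3(1)=(1-a_3)(1+a_3-2a_2^2)$, rewrite the second factor as $2(a_3-a_2^2)+(1-a_3)$, and use the classical $\mu=1$ case $|a_3-a_2^2|\le 1$, obtaining $|T_3(1)|\le |1-a_3|\bigl(2+|1-a_3|\bigr)\le 4\cdot 6=24$. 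Both decompositions yield the same bound with the same extremal rotation of Koebe, and your equality check $(1-3e^{2i\theta})(1-5e^{2i\theta})\big|_{e^{2i\theta}=-1}=4\cdot 6=24$ is correct. Your approach has the mild advantage of using only the best-known special case $|a_3-a_2^2|\le 1$ rather than the general Fekete--Szeg\"o functional, at the cost of needing to spot the factorization; the paper's grouping is more direct once one is willing to quote $|a_3-\mu a_2^2|\le 4\mu-3$ for $\mu\ge 1$. A small cleanup: your extremal $-ik(iz)$ equals $z/(1-iz)^2$, not $z/(1+z^2)^2$; the latter has $a_2=0$ and is not extremal here.
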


\begin{proof}
Let $f\in\mathcal{S}$ be of the form (\ref{p-001}). Then clearly
\begin{equation}\label{p-050}
|T_2(n)|=|a_n^2-a_{n+1}^2|\le |a_n^2|+|a_{n+1}^2|\le n^2+(n+1)^2=2n^2+2n+1.
\end{equation}
Equality holds in (\ref{p-050}) for the function $f$ defined by
\begin{equation}\label{p-070}
f(z):=\frac{z}{(1-iz)^2}=z+2iz^2-3z^3-4iz^4+5z^5+\cdots.
\end{equation}

Again, if $f\in\mathcal{S}$ is of the form (\ref{p-001}) then by the Fekete-Szeg\"o inequality for functions in $\mathcal{S}$, we have
\begin{align}\label{p-080}
|T_3(1)| &=|1-2a_2^2+2a_2^2a_3-a_3^2|\\
&\le 1+2|a_2^2|+|a_3||a_3-2a_2^2|\nonumber\\
&\le 1+8+(3)(5)\nonumber\\
&=24.\nonumber
\end{align}
Equality holds in (\ref{p-080}) for the function $f$ defined by (\ref{p-070}).
\end{proof}

\begin{rem}
Since the function $f$ defined by (\ref{p-070}) belongs to $\mathcal{S}^*$, and $\mathcal{S}^*\subset\mathcal{K}\subset\mathcal{S}$, the sharp inequalities in Theorem \ref{theorem-001} also hold for functions in $\mathcal{S}^*$ and $\mathcal{K}$. In particular, the sharp inequalities $|T_2(2)|\le 13$ and $|T_2(3)|\le 25$ hold for functions in $\mathcal{S}^*$, $\mathcal{K}$ and $\mathcal{S}$.
\end{rem}

\begin{thm}\label{theorem-005}
Let $f\in\mathcal{S}^*$ be of the form (\ref{p-001}). Then $|T_3(2)|\le 84$.

 \noindent The inequality is sharp.
\end{thm}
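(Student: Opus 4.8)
The plan is to express the symmetric Toeplitz determinant $T_3(2)$ explicitly in terms of $a_2, a_3, a_4$ and then use the coefficient representation for starlike functions coming from a Carath\'eodory function. Expanding along the first row,
\[
T_3(2)=\begin{vmatrix} a_2 & a_3 & a_4\\ a_3 & a_2 & a_3\\ a_4 & a_3 & a_2\end{vmatrix}
= a_2(a_2^2-a_3^2)-a_3(a_2a_3-a_3a_4)+a_4(a_3^2-a_2a_4)
=(a_2-a_4)\bigl(a_2^2+a_2a_4-2a_3^2\bigr),
\]
so that $|T_3(2)|\le |a_2-a_4|\,\bigl(|a_2|^2+|a_2||a_4|+2|a_3|^2\bigr)$. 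Since $f\in\mathcal S^*$, write $zf'(z)/f(z)=p(z)$ with $p\in\mathcal P$ of the form \eqref{p-030}; comparing coefficients gives $a_2=c_1$, $a_3=\tfrac12(c_2+c_1^2)$, and $a_4=\tfrac16(c_3+3c_1c_2+ 2c_1^3)$ (I would double-check these against $f(z)=z\exp\int_0^z (p(t)-1)/t\,dt$). The idea is then to bound each of the two factors using Lemma \ref{p-lemma001} and Lemma \ref{p-lemma010}.

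For the first factor, $a_2-a_4 = c_1-\tfrac16(c_3+3c_1c_2+2c_1^3)$; I would group this to apply the Efraimidis inequality (Lemma \ref{p-lemma010}) to the combination $c_3-\mu c_1c_2$ for a suitable $\mu$, absorbing the remaining $c_1$ and $c_1^3$ terms via $|c_1|\le 2$, aiming for a bound of the form $|a_2-a_4|\le M_1$. For the second factor, $|a_2|^2+|a_2||a_4|+2|a_3|^2$, one uses the known sharp coefficient bounds $|a_2|\le 2$, $|a_3|\le 3$, $|a_4|\le 4$ for starlike functions (these follow since $|c_n|\le 2$), which gives $|a_2|^2+|a_2||a_4|+2|a_3|^2\le 4+8+18=30$. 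Since the target is $84=30\times 2.8$, this forces $M_1\le 2.8$, so the first factor must be controlled rather tightly; a cruder bound $|a_2-a_4|\le |a_2|+|a_4|\le 6$ is far too weak, and even $|a_2-a_4|\le 2+2=4$ (splitting off $c_3$ cheaply) does not suffice. This is where the main work lies: one must treat $a_2-a_4$ as a genuine functional of $(c_1,c_2,c_3)$ and optimize, likely by first using Lemma \ref{p-lemma010} with the sharp choice of $\mu$ and then a one-variable optimization over $t=|c_1|\in[0,2]$ with $c_2,c_3$ eliminated.

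The main obstacle I anticipate is that the naive triangle-inequality splitting of the product $|T_3(2)|\le|a_2-a_4|\cdot(\cdots)$ loses too much: each factor, bounded separately by its own extremal configuration, is maximized by a different Carath\'eodory function, so the product bound $84$ is presumably \emph{not} attained by the Koebe-type function. (Indeed, for $k(z)=z/(1-z)^2$ one has $a_2-a_4=2-4=-2$ and $a_2^2+a_2a_4-2a_3^2 = 4+8-18=-6$, giving $|T_3(2)|=12$, nowhere near $84$.) So the claim "the inequality is sharp" will require identifying a genuinely different extremal function — I would expect something like $f$ built from $p(z)=(1+z^3)/(1-z^3)$ or a two-point measure, for which $c_1=0$ and $|c_3|$ is large, making $|a_2-a_4|$ close to its maximum while the second factor is simultaneously large. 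Confirming that such a function makes both the decomposition \eqref{} and each subsequent inequality an equality — in particular checking the sharpness clause of Lemma \ref{p-lemma010} in the regime $|2\mu-1|<1$ — is the delicate point, and I would verify the final numeric equality $|T_3(2)|=84$ directly on that candidate.
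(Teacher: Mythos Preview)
Your factorization
\[
T_3(2)=(a_2-a_4)\bigl(a_2^2+a_2a_4-2a_3^2\bigr)
\]
is exactly the one the paper uses, but from that point on your allocation of effort is backwards, and this is a genuine gap rather than a stylistic difference. You bound the \emph{second} factor crudely by $|a_2|^2+|a_2||a_4|+2|a_3|^2\le 30$ and then try to squeeze $|a_2-a_4|$ below $84/30=2.8$. That cannot succeed: the sharp bound for $|a_2-a_4|$ over $\mathcal S^*$ is~$6$, not anything near $2.8$. Indeed, for the rotated Koebe function $f(z)=z/(1-iz)^2$ one has $a_2=2i$, $a_4=-4i$, so $|a_2-a_4|=6$. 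Your calculation with the real Koebe function $k(z)=z/(1-z)^2$, which gave $|T_3(2)|=12$, misled you here: $|T_3(2)|$ is \emph{not} rotationally invariant (this is precisely the point the paper stresses in the remark following this theorem), so rotations of the same function produce different values.

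The paper's proof does the opposite split: it takes the trivial bound $|a_2-a_4|\le|a_2|+|a_4|\le 6$ on the first factor and works carefully on the second. Writing $a_2,a_3,a_4$ in terms of $c_1,c_2,c_3$ (your formulas are correct) gives
\[
a_2^2-2a_3^2+a_2a_4
= c_1^2-\tfrac13 c_1^4-\tfrac12 c_1^2 c_2-\tfrac12 c_2^2+\tfrac13 c_1 c_3,
\]
and grouping the last two cross terms as $\tfrac13|c_1|\,\bigl|c_3-\tfrac32 c_1c_2\bigr|$ lets Lemma~\ref{p-lemma010} (with $\mu=3/2$, so $|2\mu-1|=2$) bound that piece by $\tfrac13\cdot 2\cdot 4$. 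Together with $|c_1|\le 2$, $|c_2|\le2$ one gets
\[
|a_2^2-2a_3^2+a_2a_4|\le 4+\tfrac{16}{3}+2+\tfrac{8}{3}=14,
\]
whence $|T_3(2)|\le 6\cdot 14=84$. For $f(z)=z/(1-iz)^2$ one checks directly that $a_2^2-2a_3^2+a_2a_4=-4-18+8=-14$, so both factors are simultaneously extremal and the bound $84$ is sharp. Your instinct that the extremal function would involve $p(z)=(1+z^3)/(1-z^3)$ or a two-point measure is therefore off target; the extremal is simply a rotation of Koebe, and the only subtlety is choosing the rotation correctly.
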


\begin{proof}
Let $f\in\mathcal{S}^*$ be of the form (\ref{p-001}). Then there exists a function $p\in\mathcal{P}$ of the form (\ref{p-030}) such that $zf'(z)=f(z)p(z)$. Equating  coefficients, we obtain
\begin{equation}\label{p-040}
a_2=c_1,\quad a_3=\frac{1}{2}(c_2+c_1^2)\quad\mbox{ and } \quad  a_4=\frac{1}{6}c_1^3+\frac{1}{2}c_1c_2+\frac{1}{3}c_3.
\end{equation}

By a simple computation $T_3(2)$ can be written as $ T_3(2)=(a_2-a_4)(a_2^2-2a_3^2+a_2a_4)$. If $f\in\mathcal{S}^*$ then clearly, $|a_2-a_4|\le |a_2|+|a_4|\le 6$. Thus we need to maximize $|a_2^2-2a_3^2+a_2a_4|$ for functions in $\mathcal{S}^*$, and so writing $a_2, a_3$ and $a_4$ in terms of $c_1, c_2$ and $c_3$ with the help of (\ref{p-040}), we obtain
\begin{align*}
|a_2^2-2a_3^2+a_2a_4| &= \left|c_1^2-\frac{1}{3}c_1^4-\frac{1}{2}c_1^2c_2-\frac{1}{2}c_2^2+\frac{1}{3}c_1c_3\right|\\
&\le |c_1|^2+\frac{1}{3}|c_1|^4+\frac{1}{2}|c_2|^2+\frac{1}{3}|c_1|\left|c_3-\frac{3}{2}c_1c_2\right|.
\end{align*}
From Lemma \ref{p-lemma001} and Lemma \ref{p-lemma010}, it easily follows that
\begin{equation}
|a_2^2-2a_3^2+a_2a_4| \le 4+\frac{16}{3}+\frac{4}{2}+\frac{2}{3}(4)=14.
\end{equation}
Therefore, $|T_3(2)|\le 84$, and the inequality is sharp for the function $f$ defined by (\ref{p-070}).
\end{proof}

\begin{rem}
In \cite{Thomas-Halim-2017}, it was claimed that $|T_2(2)|\le 5$, $|T_2(3)|\le 7$, $|T_3(1)|\le 8$ and $|T_3(2)|\le 12$ hold for functions in $\mathcal{S}^*$, and these estimates are sharp. Similar results were  also obtained for certain close-to-convex functions. For the function $f$ defined by (\ref{p-070}), a simple computation gives $|T_2(2)|= 13$ and $|T_2(3)|= 25$, $|T_3(1)|= 24$ and $|T_3(2)|=84$ which shows that these estimates are not correct. In proving these estimates the authors assumed that $c_1>0$ which is not justified, since the functional $|T_q(n)|$ $(n\ge1, q\ge2)$ is not rotationally invariant.
\end{rem}

To prove our next result we need the following results for functions in $\mathcal{S}^*$.

\begin{lem}\cite[Theorem 3.1]{Janteng-Halim-Darus-2007}\label{p-lemma020}
Let $g\in\mathcal{S}^*$ and be of the form $g(z)= z+\sum_{n=2}^{\infty}b_n z^n$. Then $|b_2b_4-b_3^2|\le 1$, and the inequality is sharp for the Koebe function $k(z)=z/(1-z)^2$, or its rotations.
\end{lem}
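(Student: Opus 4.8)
This is the Hankel determinant bound $|b_2b_4 - b_3^2| \le 1$ for starlike functions, which is a known result (Janteng–Halim–Darus). Here is how I would reconstruct the proof. Since $g \in \mathcal{S}^*$, there is a $p \in \mathcal{P}$ with $zg'(z) = g(z)p(z)$; writing $p(z) = 1 + c_1 z + c_2 z^2 + \cdots$ and comparing coefficients gives $b_2 = c_1$, $b_3 = \frac12(c_2 + c_1^2)$, and $b_4 = \frac16 c_1^3 + \frac12 c_1 c_2 + \frac13 c_3$, exactly as in \eqref{p-040}. Substituting these into $b_2 b_4 - b_3^2$ and simplifying yields an expression of the form
\begin{equation*}
b_2 b_4 - b_3^2 = \alpha c_1^4 + \beta c_1^2 c_2 + \gamma c_2^2 + \delta c_1 c_3
\end{equation*}
for explicit rational constants $\alpha, \beta, \gamma, \delta$ (one finds $\tfrac{1}{12} c_1^4 - \tfrac{1}{12}c_1^2 c_2 - \tfrac14 c_2^2 + \tfrac13 c_1 c_3$ after the algebra, but the exact values are a routine check).

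The plan is then to estimate this functional using the standard parametrization of coefficients of Carathéodory functions. By a well-known formula (see Pommerenke / Libera–Zlotkiewicz), for $p \in \mathcal{P}$ one may write $2c_2 = c_1^2 + (4 - c_1^2)x$ and $4c_3 = c_1^3 + 2(4-c_1^2)c_1 x - (4-c_1^2)c_1 x^2 + 2(4-c_1^2)(1-|x|^2)\zeta$ for some $x, \zeta$ in the closed unit disc, and we may normalize $c_1 = c \in [0,2]$ using a rotation (the Hankel functional $b_2 b_4 - b_3^2$ \emph{is} rotationally invariant here, unlike the Toeplitz functionals, since it is homogeneous of a fixed weighted degree). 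Substituting these into the expression for $b_2 b_4 - b_3^2$, applying the triangle inequality and $|\zeta| \le 1$, and then using $|x| \le 1$, reduces the problem to maximizing a function of the two real variables $c \in [0,2]$ and $r = |x| \in [0,1]$. One checks that the maximum over this rectangle equals $1$, attained at $c = 2$ (forcing $p(z) = (1+z)/(1-z)$ up to rotation, i.e.\ $g$ the Koebe function or a rotation).

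The main obstacle is the two-variable optimization at the end: one must verify that the coefficient of the highest power of $r$ has a favorable sign (or handle the boundary cases $r = 0$, $r = 1$ and the interior critical points carefully) so that the supremum is genuinely $1$ and not larger. This is the only place where real care is needed; everything before it is bookkeeping. The sharpness claim is immediate by evaluating $b_2 b_4 - b_3^2$ on the Koebe function $k(z) = z/(1-z)^2$, where $b_n = n$ gives $2 \cdot 4 - 3^2 = -1$, so $|b_2 b_4 - b_3^2| = 1$, and the same holds for rotations $k(\bar\eta z)/\bar\eta$ by the rotational invariance noted above.
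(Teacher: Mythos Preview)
The paper does not prove this lemma at all: it is quoted verbatim from \cite{Janteng-Halim-Darus-2007} and used as a black box in the proof of Lemma~\ref{p-lemma030}. So there is no ``paper's own proof'' to compare against. Your sketch is essentially the argument of Janteng--Halim--Darus: express $b_2,b_3,b_4$ via the Carath\'eodory coefficients, substitute the Libera--Zlotkiewicz representation for $c_2,c_3$, normalize $c_1=c\in[0,2]$ (which is legitimate here because $|b_2b_4-b_3^2|$ \emph{is} rotation-invariant, as you correctly note), and optimize in $(c,|x|)$.

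One small correction: your displayed expression for $b_2b_4-b_3^2$ has a slip. Using $b_2=c_1$, $b_3=\tfrac12(c_2+c_1^2)$, $b_4=\tfrac16 c_1^3+\tfrac12 c_1c_2+\tfrac13 c_3$, the $c_1^2c_2$ terms cancel and one gets
\[
b_2b_4-b_3^2=-\tfrac{1}{12}c_1^4-\tfrac14 c_2^2+\tfrac13 c_1c_3,
\]
not $\tfrac{1}{12}c_1^4-\tfrac{1}{12}c_1^2c_2-\tfrac14 c_2^2+\tfrac13 c_1c_3$. This does not affect the strategy, and the rest of your outline (including the sharpness check on the Koebe function) is fine.
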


\begin{lem}\cite[Lemma 3]{Koepf-1987}\label{p-lemma022}
Let $g\in\mathcal{S}^*$ be of the form $g(z)=z+\sum_{n=2}^{\infty}b_n z^n$. Then for any $\lambda\in\mathbb{C}$,
$$
|b_3-\lambda b_2^2|\le \max\{1,|3-4\lambda|\}.
$$
The inequality is sharp for $k(z)=z/(1-z)^2$, or its rotations if $|3-4\lambda|\ge 1$, and for $(k(z^2))^{1/2}$, or its rotations if $|3-4\lambda|<1$.
\end{lem}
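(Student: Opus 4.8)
The plan is to reduce the functional $b_3 - \lambda b_2^2$ to a linear combination of Carath\'eodory coefficients and then apply Lemma \ref{p-lemma010}. Since $g \in \mathcal{S}^*$, there is a function $p \in \mathcal{P}$ of the form (\ref{p-030}) with $zg'(z) = g(z)\,p(z)$. Comparing the coefficients of $z^2$ and $z^3$ (exactly as in the first two relations of (\ref{p-040})) gives $b_2 = c_1$ and $b_3 = \tfrac12(c_2 + c_1^2)$, whence
$$
b_3 - \lambda b_2^2 = \tfrac12 c_2 + \Bigl(\tfrac12 - \lambda\Bigr) c_1^2 = \tfrac12\bigl(c_2 - (2\lambda - 1)c_1^2\bigr).
$$

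Next I would apply Lemma \ref{p-lemma010} with $n = 2$, $k = 1$, and $\mu = 2\lambda - 1 \in \mathbb{C}$. Since $2\mu - 1 = 4\lambda - 3$, this gives $|c_2 - (2\lambda - 1)c_1^2| \le 2\max\{1, |4\lambda - 3|\}$, and therefore
$$
|b_3 - \lambda b_2^2| \le \max\{1, |4\lambda - 3|\} = \max\{1, |3 - 4\lambda|\},
$$
which is the asserted bound.

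For the sharpness statements I would trace the extremal functions back through this chain. If $|3 - 4\lambda| \ge 1$, i.e.\ $|2\mu - 1| \ge 1$, equality in Lemma \ref{p-lemma010} is attained by $p(z) = (1+z)/(1-z)$ or its rotations; the function $g$ with $zg'(z)/g(z) = p(z)$ is then the Koebe function $k(z) = z/(1-z)^2$ or its rotations. If $|3 - 4\lambda| < 1$, equality is attained by $p(z) = (1+z^2)/(1-z^2)$ or its rotations; solving $zg'(z)/g(z) = (1+z^2)/(1-z^2)$ yields $g(z) = z/(1-z^2)$, and since $k(z^2) = z^2/(1-z^2)^2$, this is a branch of $(k(z^2))^{1/2}$, or its rotations.

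There is no real obstacle here: the argument is a one-line reduction to Carath\'eodory coefficients combined with the already-quoted Lemma \ref{p-lemma010}. The only place a short verification is needed is the sharpness claim when $|3 - 4\lambda| < 1$, where one checks that $g(z) = z/(1-z^2)$ has logarithmic derivative $zg'(z)/g(z) = (1+z^2)/(1-z^2)$ --- indeed $g'(z) = (1+z^2)/(1-z^2)^2$ --- and that $z/(1-z^2)$ is the principal branch of $(k(z^2))^{1/2}$.
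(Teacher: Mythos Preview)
Your argument is correct. However, note that the paper does not actually supply a proof of this lemma: it is quoted from \cite[Lemma 3]{Koepf-1987} and used as a black box in the proof of Lemma~\ref{p-lemma030}. So there is no ``paper's own proof'' to compare against.

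That said, your route is entirely consistent with the paper's framework: you use exactly the coefficient relations (\ref{p-040}) already derived in the proof of Theorem~\ref{theorem-005}, and the estimate is then an immediate consequence of Lemma~\ref{p-lemma010} with $\mu=2\lambda-1$. The sharpness discussion is also correct: $p(z)=(1+z)/(1-z)$ produces the Koebe function, and $p(z)=(1+z^2)/(1-z^2)$ produces $g(z)=z/(1-z^2)=(k(z^2))^{1/2}$, as you verify. There is nothing to add or repair.
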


\begin{lem}\cite[Theorem 2.2]{Ma-1999}\label{p-lemma025}
Let $g\in\mathcal{S}^*$ be of the form $g(z)= z+\sum_{n=2}^{\infty}b_n z^n$. Then
$$
|\lambda b_nb_m-b_{n+m-1}|\le \lambda nm-(n+m-1) \quad\mbox{ for } \lambda\ge\frac{2(n+m-1)}{nm},
$$
where $n,m=2,3,\ldots$. The inequality is sharp for the Koebe function $k(z)=z/(1-z)^2$, or its rotations.
\end{lem}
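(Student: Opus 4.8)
The plan is to pass to Carath\'eodory coefficients. Writing the starlike condition as $zg'(z)=g(z)p(z)$ with $p(z)=1+\sum_{k\ge1}c_kz^k\in\mathcal P$ and comparing coefficients yields the recursion $(k-1)b_k=\sum_{j=1}^{k-1}b_jc_{k-j}$ with $b_1=1$, equivalently the representation $g(z)/z=\exp\!\big(\sum_{k\ge1}\tfrac{c_k}{k}z^k\big)$. Hence each $b_k$ is an explicit polynomial in $c_1,\dots,c_{k-1}$, so $\lambda b_nb_m-b_{n+m-1}$ becomes a polynomial in $c_1,\dots,c_{n+m-2}$; I would bound it using only the two facts available for $\mathcal P$: the coefficient bound $|c_j|\le2$ (Lemma~\ref{p-lemma001}) and the sharp estimate $|c_k-\mu c_jc_{k-j}|\le2\max\{1,|2\mu-1|\}$ (Lemma~\ref{p-lemma010}).

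The heart of the matter is how to group the monomials of this polynomial. The Koebe function $k(z)=z/(1-z)^2$ has $b_k=k$, giving $\lambda b_nb_m-b_{n+m-1}=\lambda nm-(n+m-1)$, and it corresponds to $p(z)=(1+z)/(1-z)$, i.e.\ $c_j\equiv 2$. Accordingly one wants to rewrite $\lambda b_nb_m-b_{n+m-1}$ as a scalar multiple of a block $c_k-\mu c_jc_{k-j}$ (for suitable indices with $k\le n+m-1$) plus a product of lower $c$'s, chosen so that every application of Lemmas~\ref{p-lemma001} and~\ref{p-lemma010} is sharp exactly at $c_j\equiv2$. The hypothesis $\lambda\ge\tfrac{2(n+m-1)}{nm}$ is exactly what makes the relevant multipliers $\mu$ fall in the regime $|2\mu-1|\ge1$, so that Lemma~\ref{p-lemma010} returns $2|2\mu-1|$ (its value at $(1+z)/(1-z)$) rather than the flat $2$, and the triangle-inequality total collapses to precisely $\lambda nm-(n+m-1)$.

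To keep the combinatorics manageable for arbitrary $n,m\ge2$, I would induct on $n+m$. Using the recursion to peel the term $b_{n+m-2}c_1$ off $(n+m-2)b_{n+m-1}$, and/or expanding the inner factor $b_m$ in $\lambda b_nb_m$ via $(m-1)b_m=\sum_{j=1}^{m-1}b_jc_{m-j}$, one reduces $|\lambda b_nb_m-b_{n+m-1}|$ to a weighted sum of a strictly smaller instance of the lemma and a single clean Carath\'eodory estimate, the weights arranged so that the admissible range of $\lambda$ is preserved along the induction. The main obstacle is to produce a decomposition (grouping of monomials, or choice of peeling) for which the ``error'' terms — the monomials in $c_1,\dots,c_{n+m-2}$ that vanish at $c_j\equiv2$ — are absorbed by the gap between $\lambda nm-(n+m-1)$ and the value of the dominant block, \emph{uniformly} over all $\lambda\ge\tfrac{2(n+m-1)}{nm}$; a decomposition that obviously works for large $\lambda$ typically fails in the lower part of the range, and squeezing the argument down to the stated threshold is the delicate point. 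Sharpness then requires nothing extra: for $k(z)=z/(1-z)^2$ (where $c_j\equiv2$) and for its rotations $z/(1-xz)^2$ with $|x|=1$ (where $c_j=2x^j$), every estimate invoked above holds with equality and $|\lambda b_nb_m-b_{n+m-1}|=\lambda nm-(n+m-1)$.
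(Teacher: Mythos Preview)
The paper does not supply a proof of this lemma: it is quoted as \cite[Theorem~2.2]{Ma-1999} and used as a black box in the proof of Lemma~\ref{p-lemma030}. So there is no ``paper's proof'' to compare against.

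Your proposal, however, is a strategy rather than a proof, and it has a genuine gap. You correctly set up the translation to Carath\'eodory coefficients via $zg'=gp$ and identify the extremal configuration $c_j\equiv 2$, but the core step --- the induction on $n+m$ --- is never carried out. You yourself flag the obstacle: producing a decomposition that stays sharp all the way down to the threshold $\lambda=\tfrac{2(n+m-1)}{nm}$. That is exactly the content of the lemma; without an explicit peeling scheme that provably preserves the admissible range of $\lambda$ at each inductive step, the argument is a heuristic, not a proof. There is also a structural concern: Lemma~\ref{p-lemma010} controls only quadratic blocks $c_k-\mu c_jc_{k-j}$, while $b_{n+m-1}$ contains monomials in $c_1,\dots,c_{n+m-2}$ of every degree up to $n+m-2$, and triangle-inequality grouping of the higher-degree pieces typically overshoots the sharp bound. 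To complete the argument you would have to exhibit the full combinatorial decomposition and verify the $\lambda$-range at each step, or else consult Ma's original proof, which does not proceed via the two lemmas you propose to rely on.
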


\begin{lem}\label{p-lemma030}
Let $f\in\mathcal{K}$ be of the form (\ref{p-001}). Then $|a_2a_4-2a_3^2|\le 21/2$.
\end{lem}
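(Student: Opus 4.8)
The plan is to use the structural definition of close-to-convexity to write the coefficients $a_2, a_3, a_4$ in terms of the coefficients of a starlike function $g$ and a Carath\'eodory function $p$, and then reduce the estimate for $|a_2a_4 - 2a_3^2|$ to the sharp coefficient bounds collected in Lemmas \ref{p-lemma020}--\ref{p-lemma025} and Lemmas \ref{p-lemma001}--\ref{p-lemma010}. More precisely, for $f\in\mathcal{K}$ there is $g(z) = z + \sum_{n\ge2} b_n z^n \in\mathcal{S}^*$ and a function of positive real part whose ratio with $g$ gives $zf'$; after absorbing the rotation $e^{i\alpha}$ one writes $zf'(z) = g(z)h(z)$ where $h(z) = 1 + \sum_{n\ge1} p_n z^n$ has the property that $\real\big(e^{i\alpha}h(z)\big) > 0$ uniformly, so that $|p_n|\le 2\sec(\text{something})$; in practice one reduces to the normalized case and uses $|p_n|\le 2$ together with the Efraimidis-type inequality of Lemma \ref{p-lemma010} for the combination $|p_n - \mu p_k p_{n-k}|$. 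Equating coefficients in $zf'(z) = g(z)h(z)$ gives
\begin{align*}
2a_2 &= b_2 + p_1,\\
3a_3 &= b_3 + b_2 p_1 + p_2,\\
4a_4 &= b_4 + b_3 p_1 + b_2 p_2 + p_3.
\end{align*}

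Next I would substitute these into $a_2a_4 - 2a_3^2$ and expand, grouping the result so that the purely-$b$ part is $b_2b_4 - b_3^2$ (controlled by Lemma \ref{p-lemma020}), the mixed terms involving one $p_j$ are reorganized using Lemma \ref{p-lemma022} (for combinations like $b_3 - \lambda b_2^2$) and Lemma \ref{p-lemma025} (for combinations like $\lambda b_2b_3 - b_4$, with the appropriate $\lambda$ making the coefficient admissible), and the part involving $p_j$'s alone is bounded using $|p_1|\le2$, $|p_2|\le2$ and Lemma \ref{p-lemma010} applied to $p_3 - \mu p_1 p_2$. The arithmetic has to be arranged so that every group is matched against a sharp inequality with the Koebe function (or its rotation) as the common extremal function; the target numerical value $21/2$ should then emerge by adding the resulting constants, for instance something of the shape $1 + (\text{terms from }b\text{-}p\text{ cross products}) + (\text{terms from pure }p)$.

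The main obstacle I anticipate is the bookkeeping in the expansion: after multiplying everything out there are several terms (roughly a dozen monomials in $b_2,b_3,b_4,p_1,p_2,p_3$), and to get the sharp constant $21/2$ rather than a crude sum of individual moduli one must pair the terms cleverly before applying the triangle inequality — isolating $b_2b_4 - b_3^2$ intact, pairing each $p_j$-linear term with a suitable $b$-quadratic so Lemma \ref{p-lemma022} or \ref{p-lemma025} applies with the exact threshold on $\lambda$, and handling the $p_1p_2$ versus $p_3$ interaction via Lemma \ref{p-lemma010}. A secondary technical point is the rotation factor $e^{i\alpha}$ in the definition of $\mathcal{K}$: one should note that $|h$-coefficient$|\le 2$ still holds (since $e^{i\alpha}h$ has positive real part only after a shift, one actually works with $q(z) = (e^{i\alpha}h(z) - i\sin\alpha)/\cos\alpha \in\mathcal{P}$ and expresses $p_j = \cos\alpha\, q_j e^{-i\alpha}$, giving $|p_j|\le 2$), so the Carath\'eodory-type lemmas apply verbatim; I would dispatch this at the outset so the rest of the computation proceeds as in the starlike case. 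I expect no extremality claim is asserted in the lemma statement, so it suffices to produce the clean upper bound $21/2$.
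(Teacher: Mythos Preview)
Your overall plan is the paper's plan: express $2a_2$, $3a_3$, $4a_4$ via the starlike coefficients $b_j$ and the Carath\'eodory coefficients, expand $72(a_2a_4-2a_3^2)$, and group so that Lemmas~\ref{p-lemma020}, \ref{p-lemma022}, \ref{p-lemma025} control the $b$-parts while Lemmas~\ref{p-lemma001}, \ref{p-lemma010} control the rest. The six groups the paper uses are $(9b_2b_4-16b_3^2)$, $(9b_4-23b_2b_3)c_1$, $(9b_3-16b_2^2)c_1^2$, $(9b_2^2-32b_3)c_2$, $b_2(9c_3-23c_1c_2e^{-i\alpha}\cos\alpha)$, and $(9c_1c_3-16c_2^2)$, each carrying the appropriate power of $e^{-i\alpha}\cos\alpha$.

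The one place your outline underestimates the work is the handling of $\alpha$. Your claim that after writing $p_j=e^{-i\alpha}\cos\alpha\,c_j$ (with $c_j$ genuine Carath\'eodory coefficients) ``the Carath\'eodory-type lemmas apply verbatim'' and $\alpha$ can be ``dispatched at the outset'' is not quite right: the factor $e^{-i\alpha}\cos\alpha$ does \emph{not} cancel uniformly across the grouped terms. In the term $9c_3-23c_1c_2e^{-i\alpha}\cos\alpha$ the effective parameter in Lemma~\ref{p-lemma010} is $\mu=\tfrac{23}{9}e^{-i\alpha}\cos\alpha$, which genuinely depends on $\alpha$; the paper must compute $|2\mu-1|^2=(23/9)^2+(14/9)^2+2(23/9)(14/9)\cos 2\alpha$ and maximize over $\alpha$ to get $|2\mu-1|\le 37/9$, hence the bound $74$ for that block. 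A second detail you will need is that $|9c_1c_3-16c_2^2|$ is not directly of the Lemma~\ref{p-lemma010} form; the paper inserts $\pm 9c_4$ to split it as $9|c_4-c_1c_3|+9|c_4-\tfrac{16}{9}c_2^2|\le 18+46=64$. Finally, drop the expectation that every piece is sharp at the Koebe function: the lemma does not claim sharpness, and indeed the paper conjectures the true bound for $|T_3(2)|$ on $\mathcal{K}$ is $84$ rather than the $86$ this lemma yields.
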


\begin{proof}
Let $f\in\mathcal{K}$ be of the form (\ref{p-001}). Then there exists a starlike function $g$ of the form $g(z)= z+\sum_{n=2}^{\infty}b_n z^n$, and a real number $\alpha\in(-\pi/2,\pi/2)$, such that (\ref{p-020}) holds. This implies there exists a Carath\'{e}odory function $p\in\mathcal{P}$ of the form (\ref{p-030}) such that
$$
e^{i\alpha}\frac{zf'(z)}{g'(z)}= p(z)\cos\alpha+i\sin\alpha.
$$
Comparing  coefficients we obtain
\begin{align*}
2a_2&=b_2+c_1e^{-i\alpha}\cos\alpha\\
3a_3&=b_3+b_2c_1e^{-i\alpha}\cos\alpha+c_2e^{-i\alpha}\cos\alpha\\
4a_4&=b_4+b_3c_1e^{-i\alpha}\cos\alpha+b_2c_2e^{-i\alpha}\cos\alpha+c_3e^{-i\alpha}\cos\alpha,
\end{align*}
and a simple computation gives
\begin{align*}
72(a_2a_4-2a_3^2)&= (9b_2b_4-16b_3^2)+(9b_4-23b_2b_3)c_1 e^{-i\alpha}\cos\alpha\\
&\quad +(9b_3-16b_2^2)c_1^2e^{-2i\alpha}\cos^2\alpha +(9b_2^2-32b_3)c_2e^{-i\alpha}\cos\alpha\\
&\quad +(9c_3-23c_1 c_2e^{-i\alpha}\cos\alpha)b_2e^{-i\alpha}\cos\alpha +(9c_1c_3-16 c_2^2)e^{-2i\alpha}\cos^2\alpha.
\end{align*}
Consequently using the triangle inequality, we obtain
\begin{align}\label{p-135}
72|a_2a_4-2a_3^2|&\le |9b_2b_4-16b_3^2|+|9b_4-23b_2b_3||c_1| +|9b_3-16b_2^2||c_1^2|\\
&\quad +|9b_2^2-32b_3||c_2| +|9c_3-23c_1 c_2e^{-i\alpha}\cos\alpha||b_2| +|9c_1c_3-16 c_2^2|.\nonumber
\end{align}
By Lemma \ref{p-lemma020}, Lemma \ref{p-lemma022} and Lemma \ref{p-lemma025}, it easily follows that
\begin{align}
|9b_2b_4-16b_3^2| &\le 9|b_2b_4-b_3^2|+7|b_3|^2\le 9+63=72,\label{p-140}\\[2mm]
|9b_4-23b_2b_3| &= 9\left|b_4-\frac{23}{9}b_2b_3\right|\le 9\left(\frac{46}{3}-4\right)=102,\label{p-145}\\
|9b_3-16b_2^2| &= 9\left|b_3-\frac{16}{9}b_2^2\right|\le 9\left(\frac{64}{9}-3\right)=37,\label{p-150}\\
|9b_2^2-32b_3| &= 32\left|b_3-\frac{9}{32}b_2^2\right|\le 32\left(3-\frac{9}{8}\right)=60.\label{p-155}
\end{align}
Again, by Lemma \ref{p-lemma010}, it easily follows that
$$
|9c_3-23c_1 c_2e^{-i\alpha}\cos\alpha|= 9|c_3-\mu c_1 c_2|\le 18\max\{1,|2\mu-1|\}
$$
where $\mu=\dfrac{23}{9}e^{-i\alpha}\cos\alpha$. Now note that
\begin{align*}
|2\mu-1|^2
&= \left(\frac{23}{9}\cos2\alpha+\frac{14}{9}\right)^2 + \left(\frac{23}{9}\sin2\alpha\right)^2\\
&= \left(\frac{23}{9}\right)^2+ \left(\frac{14}{9}\right)^2 +2\left(\frac{23}{9}\right)\left(\frac{14}{9}\right) \cos2\alpha,\\
\end{align*}
and so
$$
1\le |2\mu-1|\le \frac{37}{9}.
$$
Therefore
\begin{equation}\label{p-160}
|9c_3-23c_1 c_2e^{-i\alpha}\cos\alpha| \le 74.
\end{equation}
Again by Lemma \ref{p-lemma010}, it easily follows that
\begin{equation}\label{p-165}
|9c_1c_3-16 c_2^2|\le  9|c_1c_3- c_4|+9\left|c_4-\frac{16}{9}c_2^2\right|\le 18+46=64.
\end{equation}
By Lemma \ref{p-lemma001}, and using the inequalities (\ref{p-140}), (\ref{p-145}), (\ref{p-150}), (\ref{p-155}), (\ref{p-160}) and (\ref{p-165}) in (\ref{p-135}), we obtain
$$
|a_2a_4-2a_3^2|\le \frac{1}{72}(72+204 +148 +120 +148 +64)=\frac{21}{2}.
$$
\end{proof}

\begin{thm}\label{theorem-020}
Let $f\in\mathcal{K}$ be of the form (\ref{p-001}). Then $|T_3(2)|\le 86$.
\end{thm}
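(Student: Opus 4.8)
The plan is to mirror the proof of Theorem~\ref{theorem-005}. From that proof we have the identity
$$T_3(2)=(a_2-a_4)\bigl(a_2^2-2a_3^2+a_2a_4\bigr),$$
so that $|T_3(2)|\le|a_2-a_4|\,\bigl|a_2^2-2a_3^2+a_2a_4\bigr|$. Since $\mathcal K\subset\mathcal S$, the coefficient bounds $|a_n|\le n$ give $|a_2-a_4|\le|a_2|+|a_4|\le 6$. For the second factor I would write $a_2^2-2a_3^2+a_2a_4=a_2^2+(a_2a_4-2a_3^2)$ and combine Lemma~\ref{p-lemma030} with $|a_2|\le 2$ to get
$$\bigl|a_2^2-2a_3^2+a_2a_4\bigr|\le|a_2|^2+|a_2a_4-2a_3^2|\le 4+\tfrac{21}{2}=\tfrac{29}{2}.$$
These two estimates already yield $|T_3(2)|\le 6\cdot\frac{29}{2}=87$, which is one unit short of the claim, so an extra argument is needed.

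The idea for the extra argument is that the two factors cannot both be near their maxima. Put $x=|a_2|\in[0,2]$; using $|a_2-a_4|\le x+4$ and $|a_2a_4-2a_3^2|\le\frac{21}{2}$ we obtain $|T_3(2)|\le(x+4)\bigl(x^2+\frac{21}{2}\bigr)=:\psi(x)$, and $\psi$ is increasing with $\psi(2)=87$, so $\psi(x)\le 86$ once $x\le\tau$ for a suitable $\tau<2$ (numerically $\tau\approx 1.97$; the interval $\tau<x\le2$ is very short). It remains to handle $\tau<|a_2|\le 2$. There the relation $2|a_2|=\bigl|b_2+c_1e^{-i\alpha}\cos\alpha\bigr|\le|b_2|+|c_1|$ (with $g(z)=z+\sum b_n z^n$, $p(z)=1+\sum c_n z^n$ and $\alpha$ as in Lemma~\ref{p-lemma030}) forces $|b_2|$, $|c_1|$ close to $2$ and $\cos\alpha$ close to $1$; in the extreme case $|a_2|=2$ the same identities show $f$ is a rotation of the Koebe function, for which $a_2^2-2a_3^2+a_2a_4=-14$ and $|T_3(2)|=84$. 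The plan is to re-run the estimates in the proof of Lemma~\ref{p-lemma030} on this short interval while keeping the true magnitudes of $|b_2|$ and $|c_1|$ (rather than replacing them by $2$); this should sharpen the bound on $|a_2a_4-2a_3^2|$ enough that $|T_3(2)|\le(|a_2|+4)\bigl(|a_2|^2+|a_2a_4-2a_3^2|\bigr)\le 86$ throughout. Equivalently, one may aim to prove $\bigl|a_2^2-2a_3^2+a_2a_4\bigr|\le\frac{43}{3}$ directly from the close-to-convex expansion, which would give $|T_3(2)|\le6\cdot\frac{43}{3}=86$ at once.

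The hard part is precisely this quantitative rigidity near $|a_2|=2$. Each coefficient functional invoked in Lemma~\ref{p-lemma030}---for example $|9b_4-23b_2b_3|$, $|9b_3-16b_2^2|$, $|9b_2^2-32b_3|$, $|9c_3-23c_1c_2e^{-i\alpha}\cos\alpha|$---attains its stated bound only at a Koebe (resp.\ Carath\'eodory) extremal, so on the relevant interval one must verify that the room gained in these functionals when $|b_2|$ or $|c_1|$ dips below $2$ more than offsets the loss from estimating $|a_2|^2$ crudely by $4$. No tools beyond Lemmas~\ref{p-lemma001}, \ref{p-lemma010}, \ref{p-lemma020}, \ref{p-lemma022} and \ref{p-lemma025} are needed; the difficulty is entirely in doing the bookkeeping carefully.
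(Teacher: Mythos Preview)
Your factorization $T_3(2)=(a_2-a_4)(a_2^2-2a_3^2+a_2a_4)$ followed by $|a_2-a_4|\le 6$ and $|a_2^2-2a_3^2+a_2a_4|\le 4+\tfrac{21}{2}$ is correct but, as you note, only yields $87$. The ``extra argument'' you propose to recover the missing unit is not a proof: it is a programme. You would have to show, uniformly for $|a_2|\in(\tau,2]$, that the slack created in the six separate estimates (\ref{p-140})--(\ref{p-165}) of Lemma~\ref{p-lemma030} when $|b_2|$, $|c_1|$ and $\cos\alpha$ are forced close to their extremes actually adds up to at least the shortfall $87-86=1$ after being filtered through the factor $|a_2-a_4|$. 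Nothing in Lemmas~\ref{p-lemma001}--\ref{p-lemma025} gives this kind of quantitative stability off the shelf; carrying it out would require reopening each of those inequalities with an explicit deficit term. That is substantially more work than the theorem warrants, and you have not done it.

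The paper sidesteps the whole issue by grouping the expanded determinant differently. Writing
\[
T_3(2)=a_2^3-2a_2a_3^2-a_4\bigl(a_2a_4-2a_3^2\bigr)
\]
and applying the triangle inequality gives
\[
|T_3(2)|\le |a_2|^3+2|a_2||a_3|^2+|a_4|\,|a_2a_4-2a_3^2|\le 8+36+4\cdot\tfrac{21}{2}=86,
\]
using only $|a_n|\le n$ and Lemma~\ref{p-lemma030}. The point is that multiplying $|a_2a_4-2a_3^2|$ by $|a_4|\le 4$ rather than by $|a_2-a_4|\le 6$ (and compensating with the cruder terms $|a_2|^3+2|a_2||a_3|^2$) happens to land exactly on $86$. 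No rigidity analysis is needed.
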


\begin{proof}
Let $f\in\mathcal{K}$ be of the form (\ref{p-001}). Then by Lemma \ref{p-lemma030} we have
\begin{align*}
|T_3(2)| &=|a_2^3-2a_2a_3^2-a_2a_4^2+2a_3^2a_4|\\
&\le |a_2|^3 + 2|a_2||a_3^2| +|a_4||a_2a_4-2a_3^2|\\
&\le 8+36+42=86.
\end{align*}
\end{proof}

\begin{rem}
In Theorem \ref{theorem-005}, we have proved that $|T_3(2)|\le 84$ for functions in $\mathcal{S}^*$, and the inequality is sharp for the function $f$ defined by (\ref{p-070}). Therefore it is natural to conjecture that $|T_3(2)|\le 84$ holds for functions in $\mathcal{K}$ and that equality holds for the function $f$ defined by (\ref{p-070}).
\end{rem}

\begin{thm}\label{theorem-015}
Let $f\in\mathcal{C}$ be of the form (\ref{p-001}). Then
\begin{enumerate}[(i)]
\item $\displaystyle |T_2(n)|\le 2$ for $n\ge2$.\\[-2mm]

\item $\displaystyle |T_3(1)|\le 4$.\\[-2mm]

\item $\displaystyle |T_3(2)|\le 4$.
\end{enumerate}
All the inequalities are sharp.
\end{thm}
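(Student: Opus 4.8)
The plan is to exploit the well-known correspondence between the convex class $\mathcal{C}$ and the starlike class $\mathcal{S}^*$, namely that $f\in\mathcal{C}$ if and only if $zf'\in\mathcal{S}^*$. Concretely, if $f(z)=z+\sum_{n\ge2}a_nz^n\in\mathcal{C}$ and we set $g(z)=zf'(z)=z+\sum_{n\ge2}b_nz^n\in\mathcal{S}^*$, then $b_n=na_n$, so $a_n=b_n/n$. This lets us translate every Toeplitz functional in the $a_n$'s into a functional in the $b_n$'s of a starlike function, to which the sharp bounds $|b_n|\le n$ (de Branges, but only $|b_2|\le2$, $|b_3|\le3$, $|b_4|\le4$ are needed here) as well as Lemmas 1.1--1.4 and 1.7 apply.

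For part (i), $|T_2(n)|=|a_n^2-a_{n+1}^2|=|a_n-a_{n+1}|\,|a_n+a_{n+1}|$. Using $a_n=b_n/n$ with $g\in\mathcal{S}^*$, one has $|a_n|\le1$ for all $n$, so crudely $|a_n^2-a_{n+1}^2|\le|a_n|^2+|a_{n+1}|^2\le2$; the more delicate point is sharpness, which should come from the Koebe function $k(z)=z/(1-z)^2$ — but note that $k\notin\mathcal{C}$, so the extremal convex function is $l(z)=z/(1-z)=z+z^2+z^3+\cdots$ (or a rotation $z/(1-\varepsilon z)$ with $|\varepsilon|=1$), for which $a_n=1$ for all $n$ and hence $a_n^2-a_{n+1}^2=0$. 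That gives $0$, not $2$, so to attain the bound $2$ one needs a rotation of the form $z/(1-iz)$ type construction at the level of $g$: take $g(z)=z/(1-iz)^2\in\mathcal{S}^*$ so that $b_2=2i$, $b_3=-3$, giving $a_2=i$, $a_3=-1$, whence $|a_2^2-a_3^2|=|-1-1|=2$. For general $n$ one similarly rotates so that $a_n$ and $a_{n+1}$ have squares of opposite sign and maximal modulus; I would verify that $f(z)=z/(1-iz)$, i.e.\ $a_n=i^{n-1}$, already does the job since then $a_n^2-a_{n+1}^2=i^{2n-2}-i^{2n}=(-1)^{n-1}(1-(-1))=\pm2$.

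For parts (ii) and (iii), I would write $T_3(1)=1-2a_2^2+2a_2^2a_3-a_3^2$ and $T_3(2)=(a_2-a_4)(a_2^2-2a_3^2+a_2a_4)$, exactly as in the proofs of Theorems 2.1 and 2.2, then substitute $a_2=b_2/2$, $a_3=b_3/3$, $a_4=b_4/4$. For $T_3(1)$ this yields a combination of $b_2^2$, $b_3$ and $b_2^2 b_3$ that can be controlled by $|b_2|\le2$, $|b_3|\le3$ together with a Fekete--Szeg\H{o}-type estimate for $\mathcal{S}^*$ (Lemma 1.5 with $\lambda$ chosen appropriately), or directly via the representation $g(z)=z+\sum b_nz^n$ with $zg'(z)/g(z)\in\mathcal{P}$; sharpness should again come from $g(z)=z/(1-iz)^2$, i.e.\ $f$ with $a_2=i$, $a_3=-1$, giving $|T_3(1)|=|1+2+2(-1)(-1)\cdot\text{?}|$ — I would recompute to confirm it equals $4$. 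For $T_3(2)$ one bounds $|a_2-a_4|\le|a_2|+|a_4|\le|b_2|/2+|b_4|/4\le1+1=2$ and then bounds $|a_2^2-2a_3^2+a_2a_4|$ by rewriting $a_2^2-2a_3^2+a_2a_4$ in terms of $b_2,b_3,b_4$ and applying Lemmas 1.4 ($|b_2b_4-b_3^2|\le1$), 1.5 and 1.6; the target is that this second factor is at most $2$, giving $|T_3(2)|\le4$.

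The main obstacle I anticipate is part (iii): after the substitution, $a_2^2-2a_3^2+a_2a_4$ becomes something like $\tfrac14 b_2^2-\tfrac29 b_3^2+\tfrac18 b_2b_4$, and a naive triangle-inequality bound gives $\tfrac14\cdot4+\tfrac29\cdot9+\tfrac18\cdot8=1+2+1=4$, which already forces the second factor to be $\le4$ and hence only yields $|T_3(2)|\le8$ — too weak. To get down to $4$ one must not lose the factor $|a_2-a_4|$ and the second factor simultaneously; the real work is a joint optimization showing that when $|a_2-a_4|$ is near its maximum the quantity $|a_2^2-2a_3^2+a_2a_4|$ is small, and vice versa. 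I would handle this by grouping $\tfrac14 b_2^2+\tfrac18 b_2 b_4 = \tfrac18 b_2(2b_2+b_4)$ and combining with $-\tfrac29 b_3^2$ via $|b_2b_4-b_3^2|\le1$, so that $\tfrac14 b_2^2-\tfrac29 b_3^2+\tfrac18 b_2b_4$ is rewritten as $\tfrac19(b_2 b_4 - b_3^2)+\big(\tfrac14 b_2^2 - \tfrac19 b_3^2 - \tfrac1{72}b_2 b_4\big)$ and each piece estimated by Koepf's and Ma's lemmas; if even this is not sharp enough, one falls back on the Carathéodory representation $zg'(z)=g(z)p(z)$, writing $b_2,b_3,b_4$ explicitly in $c_1,c_2,c_3$ as in (2.5), and carrying out the full $c$-variable optimization, expecting the extremal to be $p(z)=(1+iz)/(1-iz)$ so that $f$ is the rotated half-plane map and the bound $4$ is attained.
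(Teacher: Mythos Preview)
Your approach for parts (i) and (ii) is correct and essentially matches the paper. The paper also uses $|a_n|^2+|a_{n+1}|^2\le 2$ for (i) and the decomposition $|T_3(1)|\le 1+2|a_2|^2+|a_3||a_3-2a_2^2|$ for (ii); the only cosmetic difference is that the paper works directly in the Carath\'eodory coefficients via $2a_2=c_1$, $3a_3=\tfrac12(c_2+c_1^2)$, $4a_4=\tfrac16 c_1^3+\tfrac12 c_1 c_2+\tfrac13 c_3$ (which is exactly your $b_n=na_n$ together with the starlike formula), and bounds $|a_3-2a_2^2|=\tfrac16|c_2-2c_1^2|\le 1$ by Lemma~\ref{p-lemma010} rather than invoking Koepf's Lemma~\ref{p-lemma022} on $|b_3-\tfrac32 b_2^2|$. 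Your extremal $f(z)=z/(1-iz)$ is the paper's extremal in all three parts.

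For part (iii) there is a genuine gap. Your $b$-variable groupings do not reach the target: the naive triangle inequality gives $4$ for the second factor (hence $8$ overall), and your proposed splitting $\tfrac19(b_2b_4-b_3^2)+\big(\tfrac14 b_2^2-\tfrac19 b_3^2+\tfrac1{72}b_2b_4\big)$ (note the sign on the last term should be $+$) yields at best $\tfrac19 + 1 + 1 + \tfrac19 = \tfrac{20}{9}>2$. None of Lemmas~\ref{p-lemma020}, \ref{p-lemma022}, \ref{p-lemma025} will close this gap in the $b$-variables. The paper does exactly what you call the ``fallback'': it writes
\[
a_2^2-2a_3^2+a_2a_4=\tfrac{1}{144}\bigl(5c_1^4-36c_1^2+7c_1^2c_2+8c_2^2-6c_1c_3\bigr),
\]
and the key step you are missing is to group the cross terms as $-6c_1\bigl(c_3-\tfrac{7}{6}c_1c_2\bigr)$ and apply Lemma~\ref{p-lemma010} with $\mu=\tfrac76$ to get $|c_3-\tfrac76 c_1 c_2|\le \tfrac{8}{3}$. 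This yields
\[
|a_2^2-2a_3^2+a_2a_4|\le \tfrac{1}{144}\bigl(5\cdot 16+36\cdot 4+8\cdot 4+6\cdot 2\cdot\tfrac{8}{3}\bigr)=\tfrac{1}{144}(80+144+32+32)=2,
\]
and combined with $|a_2-a_4|\le 2$ gives the sharp bound $|T_3(2)|\le 4$. So your overall plan is right, but the decisive move is this specific grouping in the $c$-variables, not any rearrangement in the $b_n$.
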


\begin{proof}
Let $f\in\mathcal{C}$ be of the form (\ref{p-001}). Then there exists a function $p\in\mathcal{P}$ of the form (\ref{p-030}) such that $f'(z)+zf''(z)=f'(z)p(z)$. Equating  coefficients, we obtain
\begin{equation}\label{p-115}
2a_2=c_1,\quad 3a_3=\frac{1}{2}(c_2+c_1^2)\quad\mbox{ and }\quad 4a_4=\frac{1}{6}c_1^3+\frac{1}{2}c_1c_2+\frac{1}{3}c_3.
\end{equation}
Clearly
\begin{equation}\label{p-120}
|T_2(n)|=|a_n^2-a_{n+1}^2|\le |a_n^2|+|a_{n+1}^2|\le 1+1=2.
\end{equation}
Equality holds in (\ref{p-120}) for the function $f$ defined by
\begin{equation}\label{p-125}
f(z):=\frac{z}{1-iz}=z+iz^2-z^3-iz^4+z^5+\cdots.
\end{equation}

Again if $f\in\mathcal{C}$ is of the form (\ref{p-001}) then from Lemma \ref{p-lemma010} and (\ref{p-115}), we obtain
\begin{align}\label{p-130}
|T_3(1)| &=|1-2a_2^2+2a_2^2a_3-a_3^2|\\
&\le 1+2|a_2^2|+|a_3||a_3-2a_2^2|\nonumber\\
&\le 1+2+\frac{1}{6}|c_2-2c_1^2|\nonumber\\
&\le 4.\nonumber
\end{align}
It is easy to see that  equality holds in (\ref{p-130})  for the function $f$ defined by (\ref{p-125}).
\bigskip

Next note that $ T_3(2)=(a_2-a_4)(a_2^2-2a_3^2+a_2a_4)$. If $f\in\mathcal{C}$ then clearly $|a_2-a_4|\le |a_2|+|a_4|\le 2$. Thus we need to maximize $|a_2^2-2a_3^2+a_2a_4|$ for functions in $\mathcal{C}$.

\bigskip

Writing $a_2, a_3$ and $a_4$ in terms of $c_1, c_2$ and $c_3$ with the help of (\ref{p-115}), we obtain
\begin{align*}
|a_2^2-2a_3^2+a_2a_4| &= \frac{1}{144}\left|5c_1^4-36c_1^2+7c_1^2c_2+8c_2^2-6c_1c_3\right|\\
&\le \frac{1}{144}\left(5|c_1|^4+36|c_1|^2+8|c_2|^2+6|c_1||c_3-\frac{7}{6}c_1c_2|\right).
\end{align*}
From Lemma \ref{p-lemma001} and Lemma \ref{p-lemma010}, it easily follows that
\begin{equation}
|a_2^2-2a_3^2+a_2a_4| \le \frac{1}{144}\left(80+144+32+32\right)=2.
\end{equation}
Therefore, $|T_3(2)|\le 4$, and the inequality is sharp for the function $f$ defined by (\ref{p-125}).
\end{proof}

\begin{thm}\label{theorem-010}
Let $f\in\mathcal{R}$ be of the form (\ref{p-001}). Then
\begin{enumerate}[(i)]
\item $\displaystyle |T_2(n)|\le \frac{4}{n^2}+\frac{4}{(n+1)^2}\quad$ for $n\ge2$.\\[1mm]

\item $\displaystyle |T_3(1)|\le \frac{35}{9}$.\\[1mm]

\item $\displaystyle |T_3(2)|\le \frac{7}{3}$.
\end{enumerate}
The inequalities in $(i)$ and $(ii)$ are sharp.
\end{thm}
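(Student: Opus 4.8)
The plan is to use the defining property of $\mathcal{R}$: if $f\in\mathcal{R}$ has the form (\ref{p-001}), then $f'=p$ for some $p\in\mathcal{P}$ of the form (\ref{p-030}), and comparing coefficients in $f'(z)=1+\sum_{n\ge 2}na_nz^{n-1}$ gives $a_n=c_{n-1}/n$ for all $n\ge 2$; in particular $a_2=c_1/2$, $a_3=c_2/3$, $a_4=c_3/4$. Each of the three estimates then becomes a statement about $c_1,c_2,c_3$, to be handled by Lemmas \ref{p-lemma001} and \ref{p-lemma010}.

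For (i) I would simply estimate $|T_2(n)|=|a_n^2-a_{n+1}^2|\le|a_n|^2+|a_{n+1}|^2=|c_{n-1}|^2/n^2+|c_n|^2/(n+1)^2$ and invoke Lemma \ref{p-lemma001}. For sharpness, take $f$ with $f'(z)=(1+iz)/(1-iz)\in\mathcal{P}$, so $c_n=2i^{\,n}$ and $a_n=2i^{\,n-1}/n$; then $a_n^2-a_{n+1}^2=4(-1)^{n-1}\bigl(1/n^2+1/(n+1)^2\bigr)$, and equality holds. For (ii) the key point is the factorization $T_3(1)=(1-a_3)(1+a_3-2a_2^2)$. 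The first factor obeys $|1-a_3|=|1-\tfrac13 c_2|\le 1+\tfrac13|c_2|\le\tfrac53$; for the second, write $1+a_3-2a_2^2=1+\tfrac13\bigl(c_2-\tfrac32 c_1^2\bigr)$ and apply Lemma \ref{p-lemma010} with $\mu=\tfrac32$ (so $|2\mu-1|=2$), which gives $|c_2-\tfrac32 c_1^2|\le 4$ and hence $|1+a_3-2a_2^2|\le\tfrac73$. Multiplying gives $|T_3(1)|\le\tfrac{35}{9}$. The same function $f'(z)=(1+iz)/(1-iz)$ (for which $c_1=2i$, $c_2=-2$, $a_2=i$, $a_3=-\tfrac23$) realizes equality in \emph{both} factor estimates at once, so the bound is sharp.

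Part (iii) is the real work. Writing $T_3(2)=(a_2-a_4)(a_2^2+a_2a_4-2a_3^2)$ and crudely bounding each factor by the triangle inequality together with $|c_j|\le 2$ yields only $|T_3(2)|\le\tfrac{43}{12}$, well above $\tfrac73$: the loss comes from the fact that the phases of $c_1,c_2,c_3$ are tightly coupled (when $|c_1|=2$ they are completely determined) and the terms $c_1c_3$ and $c_2^2$ partly cancel, which a term-by-term estimate cannot see. To recover this coupling I would substitute the Carath\'eodory--Schur (Libera--Z\l otkiewicz) representation: with $t=|c_1|\in[0,2]$ one has $2c_2=c_1^2+(4-t^2)x$ and $4c_3=c_1^3+2(4-t^2)c_1x-(4-t^2)\overline{c_1}x^2+2(4-t^2)(1-|x|^2)y$ for some $x,y$ with $|x|,|y|\le 1$. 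Substituting these into $T_3(2)$, applying the triangle inequality in the free parameter $y$, and reducing to the moduli $t$ and $|x|$ and a single relevant argument, one is left with an elementary (if lengthy) maximization; I expect the maximum to sit on the boundary $t=2$, where $p(z)=(1+\epsilon z)/(1-\epsilon z)$ gives $T_3(2)=\epsilon^3\bigl(1-\tfrac89\epsilon^2+\tfrac7{36}\epsilon^4\bigr)$, hence $|T_3(2)|\le\tfrac{25}{12}<\tfrac73$. The main obstacle is carrying out this optimization so as to genuinely beat $\tfrac{43}{12}$, which in practice means retaining enough of the phase information; one should be prepared for a short case split according to the size of $t$. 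Since $\tfrac{25}{12}<\tfrac73$, the bound in (iii) is not expected to be sharp, consistent with the statement claiming sharpness only in (i) and (ii).
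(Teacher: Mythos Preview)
Your arguments for (i) and (ii) are correct and complete. For (ii) you use the factorization $T_3(1)=(1-a_3)(1+a_3-2a_2^2)$, whereas the paper uses the additive split $|T_3(1)|\le 1+2|a_2|^2+|a_3|\,|a_3-2a_2^2|$; both routes rest on the same estimate $\bigl|c_2-\tfrac32 c_1^2\bigr|\le 4$ from Lemma~\ref{p-lemma010}, both give $\tfrac{35}{9}$, and both are sharp for the same extremal function $f'(z)=(1+iz)/(1-iz)$. Your factored version is arguably tidier.

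For (iii), however, you have not given a proof but only a programme, and you yourself flag the Libera--Z\l otkiewicz optimization as ``the main obstacle''. The paper bypasses that machinery entirely with a short trick. First, it uses the expansion
\[
|T_3(2)|\le |a_2|^3+2|a_2|\,|a_3|^2+|a_4|\,\bigl|a_2a_4-2a_3^2\bigr|\le 1+\tfrac{8}{9}+\tfrac12\,\bigl|a_2a_4-2a_3^2\bigr|,
\]
rather than your factored form $(a_2-a_4)(a_2^2+a_2a_4-2a_3^2)$. Second, noting that $a_2a_4-2a_3^2=\tfrac{1}{72}(9c_1c_3-16c_2^2)$, it \emph{introduces the auxiliary coefficient $c_4$} and writes
\[
9c_1c_3-16c_2^2=-9(c_4-c_1c_3)+9\Bigl(c_4-\tfrac{16}{9}c_2^2\Bigr).
\]
Each bracket is controlled by Lemma~\ref{p-lemma010} (with $\mu=1$ and $\mu=\tfrac{16}{9}$), giving $|9c_1c_3-16c_2^2|\le 18+46=64$, hence $|a_2a_4-2a_3^2|\le\tfrac{8}{9}$ and $|T_3(2)|\le\tfrac{17}{9}+\tfrac{4}{9}=\tfrac{7}{3}$. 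This auxiliary-$c_4$ device is precisely the ``phase coupling'' you were hoping to squeeze out of the full parametrization; it turns the naive bound $9|c_1||c_3|+16|c_2|^2\le 100$ into $64$ in two lines, with no case analysis.
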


\begin{proof}
Let $f\in\mathcal{R}$ be of the form (\ref{p-001}). Then there exists a function $p\in\mathcal{P}$ of the form (\ref{p-030}) such that $f'(z)=p(z)$. Equating  coefficients we obtain $na_n=c_{n-1}$, and so
$$
|a_n|=\frac{1}{n}|c_{n-1}|\le \frac{2}{n}, \quad n\ge 2.
$$
The inequality is sharp for the function $f$ defined by $f'(z)=(1+z)/(1-z)$, or its rotations. Thus
\begin{equation}\label{p-090}
|T_2(n)|=|a_n^2-a_{n+1}^2|\le |a_n^2|+|a_{n+1}^2|\le \frac{4}{n^2}+\frac{4}{(n+1)^2}.
\end{equation}
 Equality holds in (\ref{p-090}) for the function $f$ defined by
\begin{equation}\label{p-100}
f'(z):=\frac{1+iz}{1-iz}.
\end{equation}

Next, if $f\in\mathcal{R}$ is of the form (\ref{p-001}) then
\begin{align}\label{p-110}
|T_3(1)| &=|1-2a_2^2+2a_2^2a_3-a_3^2|\\
&\le 1+2|a_2^2|+|a_3||a_3-2a_2^2|\nonumber\\
&\le 1+2+\frac{2}{3}\left|\frac{1}{3}c_2-\frac{1}{2}c_1^2\right|\nonumber\\
&\le 3+\frac{2}{9}\left|c_2-\frac{3}{2}c_1^2\right|\nonumber\\
&\le 3+\frac{8}{9}=\frac{35}{9}.\nonumber
\end{align}
It is easy to see that  equality in (\ref{p-110}) holds for the function $f$ defined by (\ref{p-100}).
\bigskip

Again, if $f\in\mathcal{R}$ is of the form (\ref{p-001}) then
\begin{align*}
|T_3(2)| &=|a_2^3-2a_2a_3^2-a_2a_4^2+2a_3^2a_4|\\
&\le |a_2|^3 + 2|a_2||a_3^2| +|a_4||a_2a_4-2a_3^2|\\
&\le 1+\frac{8}{9}+\frac{1}{2}|a_2a_4-2a_3^2|\\
&\le \frac{17}{9}+\frac{1}{2}|a_2a_4-2a_3^2|.
\end{align*}

Thus we need to find the maximum value of $|a_2a_4-2a_3^2|$ for functions in $\mathcal{R}$. By (\ref{p-165}), it easily follows that
$$
|a_2a_4-2a_3^2|=\frac{1}{72}|9c_1c_3-16c_2^2|\le \frac{64}{72}=\frac{8}{9}.
$$
Therefore
$$
|T_3(2)|\le \frac{17}{9}+\frac{4}{9}=\frac{7}{3}.
$$
\end{proof}

\begin{rem}
The above theorem shows that for $f\in \mathcal{R}$, the sharp inequalities $|T_2(2)|\le 13/9$ and $|T_2(3)|\le 17/36$ hold. In \cite{Radhika-Sivasubramanian-Murugusundaramoorthy-Jahangiri-2016}, it was claimed that $|T_2(2)|\le 5/9$, $|T_2(3)|\le 4/9$, $|T_3(1)|\le 13/9$ and $|T_3(2)|\le 4/9$ hold for functions in $\mathcal{R}$ and these estimates  are sharp. For the function $f$ defined by (\ref{p-100}), a simple computation gives $|T_2(2)|= 13/9$, $|T_2(3)|= 17/36$, $|T_3(1)|= 35/9$ and $|T_3(2)|\le 25/12$,  showing that the these estimates  are not correct. As explained above,  the authors assumed that $c_1>0$, which is not justified, since the functional $|T_q(n)|$ $(n\ge1, q\ge2)$ is not rotationally invariant.
\end{rem}

\bigskip

If $f\in\mathcal{T}$ is given by (\ref{p-001}), then the coefficients of $f$ can be expressed  by
\begin{equation*}\label{p-210}
a_n=\int_{-1}^{1} \frac{\sin(n\arccos t)}{\sin(\arccos t)}\,d\mu(t) =\int_{-1}^{1} U_{n-1}(t)\,d\mu(t), \quad n\ge 1
\end{equation*}
where $U_{n}(t)$ are Chebyshev polynomials of degree $n$ of the second kind.
\bigskip

Let $A_{n,m}$ denote the region of variability of the point $(a_n,a_m)$, where $a_n$ and $a_m$ are  coefficients of a given function $f\in\mathcal{T}$ with the series expansion (\ref{p-001}), i.e., $A_{n,m}:=\{(a_n(f),a_m(f)):f\in\mathcal{T}\}$. Therefore, $A_{n,m}$ is the closed convex hull of the curve
$$
\gamma_{n,m}:[-1,1]\ni t\rightarrow (U_{n-1}(t),U_{m-1}(t)).
$$
\bigskip

By the Caratheodory theorem we conclude that it is sufficient to discuss only  functions
\begin{equation}\label{p-215}
F(z,\alpha,t_1,t_2):=\alpha k(z,t_1)+(1-\alpha)k(z,t_2),
\end{equation}
where $0\le\alpha\le1$ and $-1\le t_1\le t_2\le1$.

%
\bigskip

Let $X$ be a compact Hausdorff space, and $J_{\mu}=\int_{X} J(t)\,d\mu(t)$. Szapiel \cite{Szapiel-1986} proved the following theorem.

\begin{thm}\label{theorem-030}
Let $J:[\alpha,\beta]\rightarrow \mathbb{R}^n$ be continuous. Suppose that there exists a positive integer $k$, such that for each non-zero $\overrightarrow{p}$ in $\mathbb{R}^n$ the number of solutions of any equation $\langle \overrightarrow{J(t)},\overrightarrow{p}\rangle=const$, $\alpha\le t\le \beta$ is not greater than $k$. Then, for every $\mu\in P_{[\alpha,\beta]}$ such that $J_{\mu}$ belongs to the boundary of the convex hull of $J([\alpha,\beta])$, the following statements are true:
\bigskip

\begin{enumerate}
\item if $k=2m$, then
    \begin{enumerate}[(a)]
    \item $|supp (\mu)|\le m$, or
    \item $|supp (\mu)|=m+1$ and $\{\alpha,\beta\}\subset supp (\mu).$\\
    \end{enumerate}

\item if $k=2m+1$, then
    \begin{enumerate}[(a)]
    \item $|supp (\mu)|\le m$, or
    \item $|supp (\mu)|=m+1$ and one of the points $\alpha$ and $\beta$ belongs to $supp (\mu).$
    \end{enumerate}

\end{enumerate}

\end{thm}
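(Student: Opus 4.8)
The plan is to reduce the boundary hypothesis on $J_\mu$ to a statement about a single linear functional and then to count solutions of two level equations for it, invoking the standing multiplicity hypothesis each time. First I would note that $J([\alpha,\beta])$ is compact, hence (Carath\'eodory) its convex hull $C\subset\mathbb{R}^n$ is compact and convex. Since $J_\mu$ lies on $\partial C$, there is a supporting hyperplane at $J_\mu$: a nonzero $\overrightarrow{p}\in\mathbb{R}^n$ and a constant $c$ with $\langle x,\overrightarrow{p}\rangle\le c$ for all $x\in C$ and $\langle J_\mu,\overrightarrow{p}\rangle=c$. Putting $g(t):=\langle \overrightarrow{J(t)},\overrightarrow{p}\rangle$, this says $g\le c$ on $[\alpha,\beta]$ while $\int g\,d\mu=c$; thus the nonnegative continuous function $c-g$ has zero $\mu$-integral, so $g\equiv c$ on $\operatorname{supp}(\mu)$, i.e. $\operatorname{supp}(\mu)\subseteq E:=g^{-1}(c)$. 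Applying the hypothesis to this $\overrightarrow{p}$ with the constant $c$ shows that $E$ is finite with $|E|\le k$, so $\mu=\sum_{t\in E}\mu(\{t\})\delta_t$ and $\operatorname{supp}(\mu)=\{t\in E:\mu(\{t\})>0\}$.

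Next I would sharpen the bound on $|E|$ so that it reflects the parity of $k$. Write $E=\{t_1<\dots<t_s\}$ and let $a\in\{0,1,2\}$ be the number of $t_i$ that are endpoints of $[\alpha,\beta]$. Fix $\varepsilon>0$ small enough that the relative intervals $I_i$ of radius $\varepsilon$ about $t_i$ are pairwise disjoint and meet $E$ only at $t_i$. Since $g<c$ off the finite set $E$, compactness gives $g\le c-\delta$ on $[\alpha,\beta]\setminus\bigcup_i I_i$ for some $\delta>0$, while $g<c$ on each punctured $I_i$; now choose $c'$ strictly between $c$ and the maximum of $c-\delta$ together with the finitely many values $g$ attains on the boundary points of the $I_i$. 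Then every solution of $g(t)=c'$ lies in some $I_i$, and the intermediate value theorem produces at least two solutions in $I_i$ when $t_i$ is interior (one on each side of $t_i$, since $g(t_i)=c>c'$ exceeds the relevant endpoint values) and at least one when $t_i$ is an endpoint of $[\alpha,\beta]$. Hence $g(t)=c'$ has at least $2(s-a)+a=2s-a$ solutions, and the hypothesis, now with the constant $c'$, forces $2s-a\le k$.

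It then remains to run an elementary case analysis. From $2s-a\le k$ with $a\in\{0,1,2\}$: when $k=2m$, the choices $a=0$ and $a=1$ both force $s\le m$ and $a=2$ forces $s\le m+1$; when $k=2m+1$, the choice $a=0$ forces $s\le m$ and $a=1,2$ force $s\le m+1$. Since $|\operatorname{supp}(\mu)|\le|E|=s$, the alternative $|\operatorname{supp}(\mu)|\le m$ is one possibility; otherwise $|\operatorname{supp}(\mu)|\ge m+1$, which rules out $a=0$ (and, when $k=2m$, also $a=1$), so in the remaining cases $s=m+1$ and therefore $\operatorname{supp}(\mu)=E$. That set contains both endpoints of $[\alpha,\beta]$ when $k=2m$ (since then $a=2$) and at least one of them when $k=2m+1$ (since then $a\ge1$). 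This is precisely the dichotomy stated in (1) and (2).

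The step I expect to require the most care is the level-set count in the second paragraph: one must be sure that lowering the level from $c$ to a nearby $c'$ neither creates solutions of $g(t)=c'$ away from $E$ nor destroys the two one-sided crossings at an interior point of $E$. Both are consequences of the finiteness of $E$ (itself a consequence of the standing hypothesis) combined with the uniform-gap estimate coming from compactness, and the bookkeeping of endpoints in this count is exactly what makes $\{\alpha,\beta\}$ enter the conclusions (1b) and (2b); everything else is routine convex geometry and the intermediate value theorem.
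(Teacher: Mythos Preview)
The paper does not give its own proof of this theorem; it is quoted as a result of Szapiel (the reference \cite{Szapiel-1986}) and then used only as a black box for the applications to typically real functions. So there is nothing in the paper to compare your argument against.

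That said, your argument is sound and self-contained. The supporting-hyperplane reduction to a single scalar function $g=\langle J(\cdot),\overrightarrow{p}\rangle$, the observation that $\operatorname{supp}(\mu)\subset E=g^{-1}(c)$, and then the idea of lowering the level from $c$ to a nearby $c'$ so that each interior point of $E$ is forced to produce two crossings (while an endpoint produces only one) is the standard mechanism behind results of this type in the theory of Chebyshev systems and extremal moment problems. Your inequality $2s-a\le k$ and the ensuing case split are correct; in particular, the point you flag yourself --- that when $|\operatorname{supp}(\mu)|\ge m+1$ one necessarily has $\operatorname{supp}(\mu)=E$, so that the endpoint information about $E$ transfers to $\operatorname{supp}(\mu)$ --- is exactly the step where one could slip, and you handle it cleanly. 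The only cosmetic care needed is in the choice of $c'$: you want $c'$ above both $c-\delta$ and each value $g(t_i\pm\varepsilon)$, which your description does ensure.
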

\bigskip

In the above, the symbol $\langle \overrightarrow{u},\overrightarrow{v}\rangle$ means the scalar product of vectors $\overrightarrow{u}$ and $\overrightarrow{v}$, whereas the symbols $P_X$ and $|supp(\mu)|$ describe the set of probability measures on $X$, and the cardinality of the support of $\mu$, respectively.
\bigskip

Putting $J(t)=(U_{1}(t),U_{2}(t))$, $t\in[-1,1]$ and $\overrightarrow{p}=(p_1,p_2)$, we can see that any equation  of the form $p_1U_{1}(t)+p_2U_{2}(t)=const$, $t\in[-1,1]$ has at most $2$ solutions. According to Theorem \ref{theorem-030}, the boundary of the convex hull of $J([-1,1])$ is determined by atomic measures $\mu$ for which support consists of at most 2 points. Thus we have the following.

\begin{lem}\label{p-lemma055}
The boundary of $A_{2,3}$ consists of points $(a_2,a_3)$ that correspond to the functions $F(z,1,t,0)=k(z,t)$ or $F(z,\alpha,1,-1)$ with $0\le\alpha\le1$ and $-1\le t\le1$ where $F(z,\alpha,t_1,t_2)$ is defined by (\ref{p-215}).
\end{lem}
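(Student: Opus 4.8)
The plan is to apply Theorem~\ref{theorem-030} directly with the specific vector-valued curve coming from the coefficients $a_2$ and $a_3$ of a typically real function, and then translate the abstract conclusion about supports of measures into the concrete list of extremal functions. First I would recall that for $f\in\mathcal{T}$ one has $a_n=\int_{-1}^{1}U_{n-1}(t)\,d\mu(t)$, so that $(a_2,a_3)=\int_{-1}^{1}(U_1(t),U_2(t))\,d\mu(t)=J_\mu$ with $J(t):=(U_1(t),U_2(t))$, $t\in[-1,1]$. Since $U_1(t)=2t$ and $U_2(t)=4t^2-1$, for any non-zero $\overrightarrow{p}=(p_1,p_2)$ the equation $\langle J(t),\overrightarrow{p}\rangle=2p_1t+p_2(4t^2-1)=\text{const}$ is a polynomial equation in $t$ of degree at most $2$, hence has at most $2$ solutions in $[-1,1]$. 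Thus the hypothesis of Theorem~\ref{theorem-030} holds with $k=2$, i.e.\ $m=1$.

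Next I would invoke case $(1)$ of Theorem~\ref{theorem-030}: for every probability measure $\mu$ on $[-1,1]$ with $J_\mu$ on the boundary of the convex hull of $J([-1,1])$ — equivalently, $(a_2,a_3)\in\partial A_{2,3}$ — either $|\mathrm{supp}(\mu)|\le 1$, or $|\mathrm{supp}(\mu)|=2$ and $\{-1,1\}\subset\mathrm{supp}(\mu)$. In the first case $\mu=\delta_t$ for some $t\in[-1,1]$, so $f(z)=k(z,t)=F(z,1,t,0)$ (using that $k(z,0)=z$ makes the second atom irrelevant, so the normalization $F(z,1,t,0)$ is just notation for $k(z,t)$). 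In the second case $\mu=\alpha\delta_1+(1-\alpha)\delta_{-1}$ for some $\alpha\in[0,1]$, which gives $f(z)=\alpha k(z,1)+(1-\alpha)k(z,-1)=F(z,\alpha,1,-1)$. Conversely, every such function is a valid typically real function (being a convex combination of the kernels $k(z,t)$), and by the Carath\'eodory-type reduction already recorded in the excerpt around~\eqref{p-215} these are exactly the functions whose coefficient pairs can lie on $\partial A_{2,3}$; one should note that both families do trace out boundary points, so the description is not vacuous. This yields precisely the statement of Lemma~\ref{p-lemma055}.

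The only genuinely delicate point is the bookkeeping in matching Szapiel's support conclusion to the notation $F(z,\alpha,t_1,t_2)$ with the constraint $-1\le t_1\le t_2\le 1$: a single atom at $t$ must be written either as $F(z,1,t,0)$ or, if one insists on $t_1\le t_2$, one simply observes $k(z,t)$ itself is admissible, and the degenerate cases $\alpha=0$ or $\alpha=1$ of $F(z,\alpha,1,-1)$ recover the atoms at the endpoints, so there is no real gap. I expect the main obstacle — such as it is — to be merely checking that the degree count for $\langle J(t),\overrightarrow{p}\rangle=\text{const}$ is genuinely $\le 2$ for \emph{all} non-zero $\overrightarrow{p}$ (including $p_2=0$, where it drops to a linear equation with at most one solution, still $\le 2$), and confirming that boundary points of $A_{2,3}$ correspond exactly to boundary points of the convex hull of $J([-1,1])$ since $A_{2,3}$ is by definition that closed convex hull. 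Everything else is a direct citation of Theorem~\ref{theorem-030}.
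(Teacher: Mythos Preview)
Your proposal is correct and follows essentially the same approach as the paper: the paper's argument (which appears as the short paragraph immediately preceding the lemma rather than in a separate proof environment) sets $J(t)=(U_1(t),U_2(t))$, observes that $p_1U_1(t)+p_2U_2(t)=\text{const}$ has at most two solutions, and invokes Theorem~\ref{theorem-030} with $k=2$. Your write-up is simply a more detailed and explicit version of the same reasoning, including the translation of the two support cases into the function notation $F(z,1,t,0)$ and $F(z,\alpha,1,-1)$.
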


In a similar way, one can obtain the following:

\begin{lem}\label{p-lemma060}
The boundary of $A_{3,4}$ consists of points $(a_3,a_4)$ that correspond to the functions $F(z,\alpha,t,-1)$ or $F(z,\alpha,t,1)$ with $0\le\alpha\le1$ and $-1\le t\le1$ where $F(z,\alpha,t_1,t_2)$ is defined by (\ref{p-215}).
\end{lem}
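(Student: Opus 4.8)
The plan is to mimic the proof of Lemma \ref{p-lemma055} verbatim, only changing the Chebyshev degrees involved. Here we are describing the region of variability $A_{3,4}$, so the relevant map is $J(t)=(U_{2}(t),U_{3}(t))$ for $t\in[-1,1]$, and we must apply Theorem \ref{theorem-030} to this $J$. First I would recall the explicit forms $U_{2}(t)=4t^{2}-1$ and $U_{3}(t)=8t^{3}-4t$, so that for a non-zero $\overrightarrow{p}=(p_{1},p_{2})\in\mathbb{R}^{2}$ the equation $\langle \overrightarrow{J(t)},\overrightarrow{p}\rangle=\text{const}$ becomes
\begin{equation*}
8p_{2}t^{3}+4p_{1}t^{2}-4p_{2}t-(p_{1}+\text{const})=0,
\end{equation*}
a real polynomial equation of degree at most $3$ in $t$ (degree exactly $3$ when $p_{2}\neq0$, degree $2$ when $p_{2}=0$ and $p_{1}\neq0$). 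Hence the number of solutions in $[-1,1]$ is at most $3$, so Theorem \ref{theorem-030} applies with $k=3=2m+1$ where $m=1$.

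Next I would invoke the conclusion of Theorem \ref{theorem-030} in the case $k=2m+1$ with $m=1$: for any probability measure $\mu$ on $[-1,1]$ with $J_{\mu}$ on the boundary of the convex hull of $J([-1,1])$, either $|\mathrm{supp}(\mu)|\le 1$, or $|\mathrm{supp}(\mu)|=2$ with one of the endpoints $-1$ or $1$ lying in $\mathrm{supp}(\mu)$. Translating back through the Robertson representation $a_{n}=\int_{-1}^{1}U_{n-1}(t)\,d\mu(t)$ and recalling $k(z,t)=z/(1-2tz+z^{2})$, a single-atom measure at $t$ corresponds to $F(z,1,t,-1)=k(z,t)$ (a degenerate case of the two-atom family), while a two-atom measure with mass $\alpha$ at an endpoint and mass $1-\alpha$ at an interior point $t$ corresponds exactly to a function of the form $F(z,\alpha,t,-1)$ (endpoint $-1$ in the support) or $F(z,\alpha,t,1)$ (endpoint $+1$ in the support), with $0\le\alpha\le1$ and $-1\le t\le1$. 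This yields precisely the asserted description of $\partial A_{3,4}$, since $A_{3,4}$ is the closed convex hull of $\gamma_{3,4}$ and hence every boundary point is $J_{\mu}$ for some such $\mu$ by Carath\'eodory's theorem.

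The only genuine point requiring care — the "main obstacle," though it is mild — is the degree count: one must check that the cubic cannot degenerate to an identity, i.e. that $\langle\overrightarrow{J(t)},\overrightarrow{p}\rangle$ is non-constant in $t$ for every non-zero $\overrightarrow{p}$, which holds because $\{U_{2},U_{3}\}$ together with the constant $1$ are linearly independent, so $p_{1}U_{2}+p_{2}U_{3}$ is a non-constant polynomial whenever $(p_{1},p_{2})\neq(0,0)$; thus the bound $k=3$ is legitimate. I would also note explicitly why the interior point in the two-atom case may be taken arbitrary in $[-1,1]$ while the other atom is forced to an endpoint: this is exactly clause (2)(b) of Theorem \ref{theorem-030}, and the two sub-cases (which endpoint occurs) give the two families $F(z,\alpha,t,-1)$ and $F(z,\alpha,t,1)$ in the statement. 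No sharpness or optimality claim is needed here, so the argument ends once the support structure has been transcribed into the language of the functions $F(z,\alpha,t_{1},t_{2})$.
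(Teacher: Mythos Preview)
Your proposal is correct and follows exactly the approach the paper intends: the paper gives no explicit proof of Lemma~\ref{p-lemma060}, merely writing ``In a similar way, one can obtain the following,'' and your argument is precisely that similar way---applying Theorem~\ref{theorem-030} to $J(t)=(U_{2}(t),U_{3}(t))$, observing the resulting equation is a cubic so $k=3=2m+1$ with $m=1$, and reading off clause~(2) to obtain measures supported on at most two points with one atom at an endpoint. Your treatment is in fact more careful than the paper's sketch for Lemma~\ref{p-lemma055}, since you verify non-degeneracy of the polynomial and explain how the single-atom case is absorbed into the two-parameter families.
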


Before we proceed further, we give some example of typically real functions.

\begin{example}\label{example-1}
For each $t\in[-1,1]$, the function $k(z,t)=z/(1-2tz+z^2)$ is a typically real function. For the function $k(z,1)=z/(1-z)^2$, we have $T_2(n)=n^2-(n+1)^2=-(2n+1)$, and $T_3(n)=a_n^3-2 a_{n+1}^2 a_n-a_{n+2}^2 a_n+2 a_{n+1}^2 a_{n+2}=4(n+1)$.
\end{example}

\begin{example}\label{example-2}
The function $f(z)=-\log(1-z)=z+\sum_{n=2}^{\infty} (1/n)z^n$ is a typically real function. For this function, we have $T_2(n)=1/n^2-1/(n+1)^2$ and $T_3(n)=4 \left(n^2+3 n+1\right)/(n^3 (n+1)^2 (n+2)^2)$.
\end{example}

\bigskip

\begin{lem}\label{p-lemma065}
If $f\in\mathcal{T}$ then $T_2(n)$ attains its extreme values on the boundary of $A_{n,n+1}$.
\end{lem}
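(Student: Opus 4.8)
The plan is to treat this as a soft statement about optimizing the indefinite quadratic functional
\[
g(x,y) := x^2 - y^2
\]
over the compact convex planar set $A := A_{n,n+1}$, noting that $T_2(n) = a_n^2 - a_{n+1}^2 = g(a_n,a_{n+1})$. Since $A$ is by definition the closed convex hull of the compact curve $\gamma_{n,n+1}$, it is compact and convex, so the continuous function $g$ attains its maximum and its minimum on $A$; what must be shown is that each of these is attained at some point of $\partial A$.

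Two elementary observations drive the proof. First, $g$ is strictly convex when restricted to any horizontal line $\{y=\textup{const}\}$ and strictly concave when restricted to any vertical line $\{x=\textup{const}\}$. Second, $A$ has nonempty interior in $\mathbb{R}^2$: otherwise $\gamma_{n,n+1}$ would lie on a line $\{px+qy=c\}$ with $(p,q)\neq(0,0)$, i.e.\ $p\,U_{n-1}(t)+q\,U_n(t)\equiv c$ on $[-1,1]$; comparing the coefficients of $t^n$ and $t^{n-1}$ (the leading coefficients of $U_n$ and $U_{n-1}$ being $2^n$ and $2^{n-1}$) forces $p=q=0$, a contradiction. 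Hence every interior point of $A$ lies in the relative interior of a nondegenerate horizontal chord of $A$ and of a nondegenerate vertical chord of $A$.

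Now suppose, for contradiction, that $g$ attains its maximum over $A$ only at interior points, and pick such a maximizer $p=(x_0,y_0)$. Let $[P_1,P_2]$ be the nondegenerate horizontal chord of $A$ through $p$. On this segment $g$ restricts to $s\mapsto (x_0+s)^2-y_0^2$, a strictly convex function of one real variable, so its maximum over $[P_1,P_2]$ is attained at an endpoint and strictly exceeds its value at the interior point $p$; thus $g(P_i)>g(p)$ for some $i$, contradicting maximality of $g(p)$. Therefore some maximizer of $g$ over $A$ lies on $\partial A$. The same reasoning, applied to the vertical chord through a would-be interior minimizer and using the strict concavity of $s\mapsto x_0^2-(y_0+s)^2$, shows that some minimizer of $g$ over $A$ lies on $\partial A$. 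This is precisely the assertion that $T_2(n)$ attains its extreme values on $\partial A_{n,n+1}$.

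There is no real obstacle here: the argument is purely a convexity/concavity observation about the saddle-shaped functional $g$, and the only points requiring care are (a) checking that $A_{n,n+1}$ is genuinely two-dimensional, so that its interior points carry nondegenerate chords in the two coordinate directions, and (b) treating the maximum and the minimum separately, since $g$ is convex along one family of lines and concave along the other. Combined with the descriptions of $\partial A_{2,3}$ and $\partial A_{3,4}$ in Lemmas \ref{p-lemma055} and \ref{p-lemma060}, this reduces the computation of $\max|T_2(n)|$ over $\mathcal{T}$ (for $n=2,3$) to a one-parameter optimization along the explicit extremal functions $F(z,\alpha,t_1,t_2)$.
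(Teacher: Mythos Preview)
Your argument is correct but follows a different route from the paper's. The paper simply observes that the only critical point of $\phi(x,y)=x^2-y^2$ in $\mathbb{R}^2$ is the origin, where $\phi=0$; it then invokes Examples~\ref{example-1} and~\ref{example-2} to exhibit points of $A_{n,n+1}$ at which $\phi$ is strictly negative and strictly positive, so neither extremum can occur at the unique critical point and both must lie on $\partial A_{n,n+1}$. Your proof instead exploits the separable saddle structure of $g(x,y)=x^2-y^2$: strict convexity along horizontal chords pushes any interior maximizer out to a boundary endpoint, and strict concavity along vertical chords does the same for the minimum. This is self-contained (it does not rely on the two examples) and requires you to verify that $A_{n,n+1}$ has nonempty interior, which you do cleanly via the leading coefficients of $U_{n-1}$ and $U_n$. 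The paper's version is shorter because the sign information is already available from the examples; your version is a bit more robust and would transfer unchanged to any functional of the form $h(a_n)-k(a_{n+1})$ with $h,k$ strictly convex.
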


\begin{proof}
Let $\phi(x,y)=x^2-y^2$, where $x=a_n$ and $y=a_{n+1}$. The only critical point of $\phi$ is $(0,0)$ and $\phi(0,0)=0$. Since $\phi$ may be positive  as well as negative for $(x,y)\in A_{n,n+1}$ (see Example \ref{example-1} and Example \ref{example-2}), the extreme values of $\phi$ are attained on the boundary of $A_{n,n+1}$.
\end{proof}

In a similar way, we can prove the following:

\begin{lem}\label{p-lemma065}
If $f\in\mathcal{T}$ then $T_3(1)$ attains its extreme values on the boundary of $A_{2,3}$.
\end{lem}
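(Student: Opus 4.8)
The plan is to mimic the proof of the preceding lemma (the one for $T_2(n)$). Put $x=a_2$ and $y=a_3$; expanding the $3\times 3$ determinant gives $T_3(1)=\psi(x,y)$ with $\psi(x,y):=1-2x^2+2x^2y-y^2$. As $f$ runs over $\mathcal{T}$ the point $(x,y)=(a_2,a_3)$ runs over the compact convex set $A_{2,3}$, so the continuous function $\psi$ attains a maximum and a minimum there. I want to show that neither is attained at an interior point of $A_{2,3}$; since an interior extremum of a differentiable function must be a critical point, it suffices to locate all critical points of $\psi$, compute $\psi$ at each of them, and then exhibit points of $A_{2,3}$ at which $\psi$ is strictly larger, respectively strictly smaller, than every critical value.

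First I would compute the gradient: $\partial_x\psi=4x(y-1)$ and $\partial_y\psi=2x^2-2y$. Setting both equal to zero forces $y=x^2$ and $x\in\{0,1,-1\}$, so the only critical points of $\psi$ in $\mathbb{R}^2$ are $(0,0)$, $(1,1)$ and $(-1,1)$, with $\psi(0,0)=1$ and $\psi(1,1)=\psi(-1,1)=0$. Thus every critical value of $\psi$ lies in $\{0,1\}$.

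Next I would record two typically real functions whose coefficient pairs push $\psi$ outside $\{0,1\}$ on either side. By Example~\ref{example-1}, $k(z,1)=z/(1-z)^2$ has $(a_2,a_3)=(2,3)$ and $T_3(1)=\psi(2,3)=8$ (this is the value $4(n+1)$ at $n=1$), while $k(z,\tfrac12)$ has $(a_2,a_3)=(1,0)$ and $T_3(1)=\psi(1,0)=-1$; both functions are typically real, so both points lie in $A_{2,3}$. Hence $\max_{A_{2,3}}\psi\ge 8$ and $\min_{A_{2,3}}\psi\le -1$, so neither extreme value of $\psi$ over $A_{2,3}$ equals $0$ or $1$. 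Consequently neither the maximum nor the minimum is attained at a critical point, and both are therefore attained on $\partial A_{2,3}$, which is exactly the assertion.

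I do not expect a genuine obstacle. The only points needing a word of care are the completeness of the list of critical points — immediate from the factorization $\partial_x\psi=4x(y-1)$, which reduces the analysis to the two cases $x=0$ and $y=1$ — and the remark that it is irrelevant whether $(0,0)$, $(1,1)$, $(-1,1)$ lie in the interior or on the boundary of $A_{2,3}$: since the extreme values of $\psi$ are strictly outside the set $\{0,1\}$ of critical values, in either case the maximum and minimum of $\psi$ are forced onto the boundary of $A_{2,3}$.
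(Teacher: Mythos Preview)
Your proof is correct and follows exactly the template the paper indicates (the paper does not actually prove this lemma, merely writing ``In a similar way, we can prove the following'' after the $T_2(n)$ lemma). Your computations of the critical points and critical values are accurate, and your choice of $k(z,\tfrac12)$ to produce a value $-1<0$ is a sensible substitute for Example~\ref{example-2}, which would give $T_3(1)=5/9\in(0,1)$ and hence would not suffice here.
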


Since all coefficients of $f\in\mathcal{T}$ are real, we look for the lower and the upper bounds of $T_q(n)$ instead of the bound of $|T_q(n)|$. The proof of the following theorem is obvious.

\begin{thm}\label{theorem-040}
For every function $f\in\mathcal{T}$ of the form (\ref{p-001}), we have $-(n+1)^2\le T_2(n)\le n^2$.
\bigskip

In particular
\begin{enumerate}[(i)]
\item if $n$ is odd then $\max \{T_2(n):f\in\mathcal{T}\}=n^2$ and equality attained for the function $F(z,1/2,1,-1)$.

\item if $n$ is even then $\min \{T_2(n):f\in\mathcal{T}\}=-(n+1)^2$ and equality attained for the function $F(z,1/2,1,-1)$.
\end{enumerate}
\end{thm}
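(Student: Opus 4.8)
The plan is to reduce the statement to the integral representation $a_n=\int_{-1}^{1}U_{n-1}(t)\,d\mu(t)$ recorded above, together with the elementary bound $|U_{n-1}(t)|\le n$ for $t\in[-1,1]$ (which follows at once from $U_{n-1}(\cos\theta)=\sin(n\theta)/\sin\theta$ and $|\sin(n\theta)|\le n\,|\sin\theta|$). These give Rogosinski's estimate $|a_n|\le n$ for every $f\in\mathcal{T}$. Since all coefficients of a typically real function are real, $a_n^2\ge 0$ and $a_{n+1}^2\ge 0$, so
\[
-(n+1)^2\le -a_{n+1}^2\le T_2(n)=a_n^2-a_{n+1}^2\le a_n^2\le n^2 ,
\]
which is the asserted two-sided bound; this is the whole of the ``obvious'' part.

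For the extremal statements I would simply evaluate the Taylor coefficients of $F(z,1/2,1,-1)=\tfrac12 k(z,1)+\tfrac12 k(z,-1)$. From $k(z,1)=z/(1-z)^2=\sum_{m\ge1}m z^m$ and $k(z,-1)=z/(1+z)^2=\sum_{m\ge1}(-1)^{m-1}m z^m$, the coefficient of $z^m$ in $F(z,1/2,1,-1)$ is $\tfrac{m}{2}\bigl(1+(-1)^{m-1}\bigr)$, that is, $m$ if $m$ is odd and $0$ if $m$ is even. Hence for odd $n$ one has $a_n=n$, $a_{n+1}=0$ and $T_2(n)=n^2$, so the upper bound is attained; for even $n$ one has $a_n=0$, $a_{n+1}=n+1$ and $T_2(n)=-(n+1)^2$, so the lower bound is attained.

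There is no real obstacle here: because $T_2(n)$ is not rotation invariant we cannot normalize, but $\mathcal{T}$ carries only real coefficients, so the sign of $T_2(n)$ is genuine and the extremal profile is pinned down by the single explicit two-atom measure $\tfrac12(\delta_{1}+\delta_{-1})$. The only non-arithmetic ingredient is the sharp bound $|a_n|\le n$ on $\mathcal{T}$, which may be quoted or derived in one line as above; the only thing to watch is keeping track of the parity of $n$ when deciding which of the two bounds is forced.
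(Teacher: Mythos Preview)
Your proof is correct and is exactly the intended argument: the paper records this theorem as ``obvious'' and gives no proof, and what you have written is precisely the obvious route---the Rogosinski bound $|a_n|\le n$ for $\mathcal{T}$ together with $T_2(n)=a_n^2-a_{n+1}^2$, followed by the explicit computation of the coefficients of $F(z,1/2,1,-1)$ to verify sharpness. There is nothing to add.
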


\begin{thm}\label{theorem-045}
For $f\in \mathcal{T}$,  $\ds \max \{T_2(2):f\in\mathcal{T}\}=\frac{5}{4}$.
\end{thm}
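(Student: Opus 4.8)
The plan is to turn this into a one‑variable optimization by using the description of the coefficient body $A_{2,3}$. Every $f\in\mathcal{T}$ has real coefficients $a_n=\int_{-1}^{1}U_{n-1}(t)\,d\mu(t)$, so $T_2(2)=a_2^2-a_3^2=\phi(a_2,a_3)$ with $\phi(x,y)=x^2-y^2$ is a continuous function on the compact convex set $A_{2,3}$. Exactly as in the proof of Lemma~\ref{p-lemma065}, the only critical point of $\phi$ is the origin, where $\phi$ vanishes; since $\phi$ takes both signs on $A_{2,3}$ (compare Examples~\ref{example-1} and~\ref{example-2}), the maximum of $T_2(2)$ over $\mathcal{T}$ must be attained on $\partial A_{2,3}$.

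Next I would invoke Lemma~\ref{p-lemma055}: the boundary $\partial A_{2,3}$ is traced out by the functions $k(z,t)=F(z,1,t,0)$ with $t\in[-1,1]$ and by $F(z,\alpha,1,-1)$ with $\alpha\in[0,1]$. It therefore suffices to evaluate $T_2(2)$ along these two families, using $U_0\equiv 1$, $U_1(t)=2t$, $U_2(t)=4t^2-1$, and $U_{n-1}(\pm1)=(\pm1)^{n-1}n$.

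For $f=k(z,t)$ we have $a_2=2t$ and $a_3=4t^2-1$, so, writing $s=t^2\in[0,1]$,
\[
T_2(2)=4t^2-(4t^2-1)^2=-16s^2+12s-1,
\]
a downward parabola in $s$ with maximum value $5/4$ attained at $s=3/8$; hence $T_2(2)\le 5/4$ on this family, with equality for $t=\pm\sqrt{6}/4$. For $f=F(z,\alpha,1,-1)$ one computes $a_2=4\alpha-2$ and $a_3=3$, whence $T_2(2)=(4\alpha-2)^2-9\le -5$ for $\alpha\in[0,1]$. Comparing the two families and recalling that the maximum is attained on $\partial A_{2,3}$, we conclude $\max\{T_2(2):f\in\mathcal{T}\}=5/4$, attained by the typically real function $k(z,\sqrt{6}/4)$ (equivalently $k(z,-\sqrt{6}/4)$), for which $a_2^2=3/2$ and $a_3^2=1/4$.

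There is no real obstacle here: once Lemmas~\ref{p-lemma055} and~\ref{p-lemma065} are available the argument reduces to a short polynomial optimization. The only point needing a little care is the justification that the extremum lies on $\partial A_{2,3}$ rather than at the interior critical point $(0,0)$, which is precisely the sign argument used in the proof of Lemma~\ref{p-lemma065} and the reason Examples~\ref{example-1} and~\ref{example-2} are recorded beforehand.
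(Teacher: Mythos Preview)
Your proof is correct and follows essentially the same route as the paper: reduce to the boundary of $A_{2,3}$ via Lemma~\ref{p-lemma065}, use Lemma~\ref{p-lemma055} to restrict to the two one-parameter families $k(z,t)$ and $F(z,\alpha,1,-1)$, and optimize the resulting polynomials to obtain the maximum $5/4$ at $t=\pm\sqrt{6}/4=\pm\sqrt{3}/(2\sqrt{2})$. Your write-up is in fact slightly more explicit than the paper's in invoking Lemma~\ref{p-lemma065} and in carrying out the quadratic optimization via the substitution $s=t^2$.
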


\begin{proof}
By Lemma \ref{p-lemma055}, it is enough to consider the functions $F(z,1,t,0)=k(z,t)$ and $F(z,\alpha,1,-1)$ with $0\le\alpha\le1$ and $-1\le t\le1$.
\bigskip

\textbf{Case 1.} For the function $F(z,1,t,0)=k(z,t)=z+2 t z^2+\left(4 t^2-1\right) z^3+\left(8 t^3-4 t\right) z^4+\cdots$, we have
$$
a_2^2-a_3^2=-16 t^4+12 t^2-1\le \frac{5}{4}.
$$

\textbf{Case 2.} For the function $F(z,\alpha,1,-1)=z+ (4 \alpha -2) z^2+3 z^3+(8 \alpha -4) z^4+\cdots$, we have
$$
a_2^2-a_3^2=(2-4 \alpha )^2-9\le -5.
$$

\noindent The conclusion follows from Cases 1 and  2, with the maximum attained for the function $F(z,1,t,0)=k(z,t)$ with $\ds t=\frac{\sqrt3}{2\sqrt2}$.
\end{proof}

\begin{cor}
For  $f\in \mathcal{T}$, we have the sharp inequality $-9\le T_2(2)\le \frac{5}{4}$.
\end{cor}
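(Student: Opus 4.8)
The inequality is an immediate consequence of Theorems \ref{theorem-040} and \ref{theorem-045}, so the plan is simply to assemble the two one-sided bounds and to record their extremal functions. For the upper estimate there is nothing new to prove: Theorem \ref{theorem-045} already gives $\max\{T_2(2):f\in\mathcal{T}\}=5/4$, the value being attained for the typically real function $k(z,t)$ at $t=\sqrt3/(2\sqrt2)$; in particular $T_2(2)\le 5/4$ for every $f\in\mathcal{T}$.

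For the lower estimate I would specialize Theorem \ref{theorem-040} to $n=2$. Since $2$ is even, part $(ii)$ of that theorem yields $\min\{T_2(2):f\in\mathcal{T}\}=-(2+1)^2=-9$, with equality attained for $F(z,1/2,1,-1)$, which lies in $\mathcal{T}$ because it is the convex combination $\tfrac12 k(z,1)+\tfrac12 k(z,-1)$ of the functions $k(z,1)$ and $k(z,-1)$ (cf. the representation (\ref{p-215}) and Robertson's integral formula). Hence $T_2(2)\ge -9$ for all $f\in\mathcal{T}$.

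Combining the two bounds gives $-9\le T_2(2)\le 5/4$ for $f\in\mathcal{T}$, and since each endpoint is realised by an explicit member of $\mathcal{T}$, the double inequality is sharp. The only point that requires a word of care is checking that the two extremal functions genuinely belong to $\mathcal{T}$; both are of the form (\ref{p-215}) with admissible parameters $0\le\alpha\le1$ and $-1\le t_1\le t_2\le 1$, so this is immediate. There is therefore no real obstacle here: the corollary merely packages the sharp two-sided estimate that the preceding theorems have already established separately, one endpoint at a time.
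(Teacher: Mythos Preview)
Your proposal is correct and matches the paper's approach exactly: the corollary is stated without proof in the paper precisely because it is the immediate combination of Theorem~\ref{theorem-040}(ii) (with $n=2$, giving the sharp lower bound $-9$ at $F(z,1/2,1,-1)$) and Theorem~\ref{theorem-045} (giving the sharp upper bound $5/4$ at $k(z,\sqrt3/(2\sqrt2))$). There is nothing to add.
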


\bigskip

\begin{thm}\label{theorem-050}
For  $f\in \mathcal{T}$, we have $\ds \min \{T_2(3):f\in\mathcal{T}\}=-7$.
\end{thm}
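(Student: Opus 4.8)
The plan is to follow the pattern of the proof of Theorem \ref{theorem-045}, now working with the region of variability $A_{3,4}$. First I would observe, by the same critical‑point argument used for $T_2(n)$, that $T_2(3) = a_3^2 - a_4^2$ attains its extreme values on the boundary of $A_{3,4}$: writing $\phi(x,y) = x^2 - y^2$, the only critical point of $\phi$ is the origin, where $\phi = 0$, while $\phi$ already takes the value $-7 < 0$ at the point $(3,4) \in A_{3,4}$ arising from the Koebe function $k(z,1)$; since $A_{3,4}$ is compact, the minimum cannot be interior and hence is attained on $\partial A_{3,4}$. By Lemma \ref{p-lemma060} it therefore suffices to minimise $T_2(3)$ over the two families $F(z,\alpha,t,-1)$ and $F(z,\alpha,t,1)$ with $0 \le \alpha \le 1$ and $-1 \le t \le 1$, where $F(z,\alpha,t_1,t_2)$ is given by (\ref{p-215}).

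Next I would compute the coefficients from $a_n = \alpha U_{n-1}(t_1) + (1-\alpha)U_{n-1}(t_2)$ together with $U_2(t) = 4t^2-1$, $U_3(t) = 8t^3-4t$ and $U_2(\pm1) = 3$, $U_3(\pm1) = \pm4$. Setting $\beta = 4\alpha \in [0,4]$, the first family gives $a_3 = \beta(t^2-1)+3$, $a_4 = \beta(2t^3-t+1)-4$, and the second $a_3 = \beta(t^2-1)+3$, $a_4 = \beta(2t^3-t-1)+4$. In both cases, for fixed $t$, $T_2(3)$ is a quadratic polynomial in $\beta$ whose constant term equals $-7$ (its value at $\beta = 0$, where the function is the Koebe function $k(z,-1)$ or $k(z,1)$, with $(a_3,a_4) = (3,-4)$ or $(3,4)$ respectively). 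Hence $T_2(3) = -7 + \beta\, h(\beta)$ with $h$ \emph{affine} in $\beta$, and since $\beta \ge 0$ the inequality $T_2(3) \ge -7$ is equivalent to $h(\beta) \ge 0$ on $[0,4]$, which, $h$ being affine, reduces to the two endpoint checks $h(0) \ge 0$ and $h(4) \ge 0$.

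For the first family a short computation gives $h(0) = 2(t+1)(8t^2-5t+1)$ and $h(4) = -2(t^2-1)(8t^4-2t^2+1)$; for the second, $h(0) = -2(t-1)(8t^2+5t+1)$ and the same $h(4)$. Each of the quadratics $8t^2 \mp 5t + 1$ and $8t^4 - 2t^2 + 1$ has negative discriminant (in $t$, resp.\ in $t^2$) and is therefore strictly positive, while $t+1 \ge 0$, $t-1 \le 0$ and $t^2 - 1 \le 0$ on $[-1,1]$; hence $h(0) \ge 0$ and $h(4) \ge 0$ in both cases. Consequently $T_2(3) \ge -7$ on $\partial A_{3,4}$, and therefore for all $f \in \mathcal{T}$, with equality attained for $k(z,\pm1)$ (in agreement with Example \ref{example-1}). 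This yields $\min\{T_2(3) : f \in \mathcal{T}\} = -7$.

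There is no serious obstacle once the affine structure is noticed; the point requiring care is the passage from $(\alpha,t)$ to $\beta$ and keeping the right signs in the coefficient formulas. If one did not observe that $T_2(3)+7$ carries a factor of $\beta$ leaving an affine remainder, one would instead face a genuine parabola analysis in $\beta$ for each slice — with the extra work of locating the vertex and showing it falls outside $[0,4]$ — so I would make sure to present the $-7 + \beta h(\beta)$ decomposition and to verify the factorisations $8t^3 + 3t^2 - 4t + 1 = (t+1)(8t^2-5t+1)$, $8t^3 - 3t^2 - 4t - 1 = (t-1)(8t^2+5t+1)$ and $-16t^6 + 20t^4 - 6t^2 + 2 = -2(t^2-1)(8t^4-2t^2+1)$ explicitly before finalising.
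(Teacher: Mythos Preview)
Your proof is correct and follows the same route as the paper: reduce to the boundary of $A_{3,4}$ via Lemma~\ref{p-lemma060} and then minimise $a_3^2-a_4^2$ over the two one-parameter families $F(z,\alpha,t,\pm1)$. Your affine-in-$\beta$ decomposition $T_2(3)=-7+\beta\,h(\beta)$ together with the explicit factorisations is a cleaner execution of the step the paper dispatches with ``by elementary calculus'', and your computations check out.
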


\begin{proof}
By Lemma \ref{p-lemma060}, it is enough to consider the functions $F(z,\alpha,t,-1)$ and $F(z,\alpha,t,-1)$ with $0\le\alpha\le1$ and $-1\le t\le1$.
\bigskip

\textbf{Case 1.} For the function $F(z,\alpha,t,-1)=z+2(\alpha +\alpha  t-1)z^2+ \left(4\alpha t^2-4\alpha+3\right)z^3+ \left(4\alpha+8\alpha t^3-4 \alpha t-4\right)z^4+\cdots$, we have
$$
T_2(3)=a_3^2-a_4^2=\left(4\alpha t^2-4\alpha+3\right)^2-\left(4\alpha+8\alpha t^3-4 \alpha t-4\right)^2:=\phi(\alpha,t).
$$
By elementary calculus, one can verify that
$$
\min_{0\le\alpha\le1,-1\le t\le1} \phi(\alpha,t)=\phi(0,0)=-7.
$$

\textbf{Case 2.} For the function $F(z,\alpha,t,1)=z+ 2(1-\alpha +\alpha  t) z^2+\left(3-4\alpha+4\alpha t^2\right)z^3+\left(4-4\alpha-4 \alpha t+8\alpha t^3\right) z^4+\cdots$, we have $a_2^2-a_3^2=\phi(\alpha,-t)$, and so
$$
\min_{0\le\alpha\le1,-1\le t\le1} \phi(\alpha,-t)=\phi(0,0)=-7.
$$
The conclusion follows from Cases 1 and  2, and the maximum is attained for the function $F(z,0,0,1)$ or $F(z,0,0,-1)$.
\end{proof}

\begin{cor}
For  $f\in \mathcal{T}$, we have the sharp inequality $-7\le T_2(3)\le 9$.
\end{cor}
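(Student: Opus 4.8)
The plan is to deduce the corollary by assembling the two one-sided estimates that have already been established. Since all coefficients of $f\in\mathcal{T}$ are real, $T_2(3)=a_3^2-a_4^2$ is real-valued on $\mathcal{T}$, and the sharp double inequality $-7\le T_2(3)\le 9$ is equivalent to separately identifying the sharp maximum and the sharp minimum of $T_2(3)$ over $\mathcal{T}$.

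For the upper bound I would invoke Theorem \ref{theorem-040} directly. Specializing the general estimate $-(n+1)^2\le T_2(n)\le n^2$ to $n=3$ gives $T_2(3)\le 9$. Moreover, since $n=3$ is odd, part (i) of that theorem asserts $\max\{T_2(3):f\in\mathcal{T}\}=9$, attained by $F(z,1/2,1,-1)$. Concretely, $F(z,1/2,1,-1)=\tfrac{1}{2}k(z,1)+\tfrac{1}{2}k(z,-1)$ has coefficients $a_n=\tfrac{n}{2}\bigl(1+(-1)^{n-1}\bigr)$, so that $a_3=3$ and $a_4=0$, whence $T_2(3)=9$. This confirms that the upper bound $9$ is sharp.

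For the lower bound I would appeal to Theorem \ref{theorem-050}, which already establishes $\min\{T_2(3):f\in\mathcal{T}\}=-7$. Hence $T_2(3)\ge -7$, with equality attained by $F(z,0,0,\pm 1)=k(z,\pm 1)$, for which $a_3=3$ and $a_4=\pm 4$, giving $T_2(3)=9-16=-7$. Combining the two one-sided statements yields the sharp inequality $-7\le T_2(3)\le 9$.

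There is essentially no genuine obstacle in this final step, since the substantive work is carried by Theorems \ref{theorem-040} and \ref{theorem-050}. The only point worth flagging is that the crude lower bound $-(n+1)^2=-16$ coming from the general Theorem \ref{theorem-040} is \emph{not} sharp, so one genuinely needs the refined boundary analysis of Theorem \ref{theorem-050}, resting on Lemma \ref{p-lemma060} and the Szapiel support-reduction theorem (Theorem \ref{theorem-030}), to replace $-16$ by the correct value $-7$. The upper bound, by contrast, is already sharp at the level of the general theorem, so no extra analysis is required there.
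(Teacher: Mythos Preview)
Your proposal is correct and matches the paper's intent exactly: the corollary is stated without a separate proof, being an immediate consequence of combining the sharp upper bound $T_2(3)\le 9$ from Theorem~\ref{theorem-040}(i) (odd $n=3$) with the sharp lower bound $\min T_2(3)=-7$ from Theorem~\ref{theorem-050}. Your explicit verification of the extremal functions and your remark that the crude bound $-(n+1)^2=-16$ must be replaced by the refined boundary analysis are both accurate and add nothing beyond what the paper already implicitly assumes.
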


\bigskip

\begin{thm}\label{theorem-055}
For  $f\in \mathcal{T}$, we have $\ds \max \{T_3(1):f\in\mathcal{T}\}=8$, and $\ds \min \{T_3(1):f\in\mathcal{T}\}=-8$.
\end{thm}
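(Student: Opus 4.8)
The plan is to proceed exactly as in the proof of Theorem \ref{theorem-045} and Theorem \ref{theorem-050}, invoking Lemma \ref{p-lemma065} to reduce to the boundary of $A_{2,3}$, which by Lemma \ref{p-lemma055} is carried by the two one-parameter families $F(z,1,t,0)=k(z,t)$ and $F(z,\alpha,1,-1)$. First I would record the coefficient data: for $k(z,t)$ one has $a_2=2t$, $a_3=4t^2-1$, so that
\[
T_3(1)=1-2a_2^2+2a_2^2a_3-a_3^2
      =1-8t^2+8t^2(4t^2-1)-(4t^2-1)^2,
\]
a quartic polynomial $\psi(t)$ in $t$ on $[-1,1]$; and for $F(z,\alpha,1,-1)$ one has $a_2=4\alpha-2$, $a_3=3$, so that $T_3(1)=1-2(4\alpha-2)^2+6(4\alpha-2)^2-9=4(4\alpha-2)^2-8$, a quadratic in $\alpha$ on $[0,1]$.

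Next I would optimize each piece by elementary calculus. On the second family $4(4\alpha-2)^2-8$ ranges over $[-8,8]$ as $(4\alpha-2)^2$ ranges over $[0,4]$, giving the maximum $8$ at $\alpha\in\{0,1\}$ (i.e. at $k(z,\pm1)$, which is consistent with Example \ref{example-1} since $T_3(1)=4(n+1)$ there gives $8$ for $n=1$) and the minimum $-8$ at $\alpha=1/2$. On the first family I would substitute $u=t^2\in[0,1]$, reducing $\psi$ to a quadratic $1-8u+8u(4u-1)-(4u-1)^2=16u^2-8u$ — wait, I should carry the algebra carefully; in any case it is a quadratic in $u$ on $[0,1]$, whose extrema on the interval are found at the endpoints $u=0,1$ and at the single interior critical point, yielding values that I expect to lie within $[-8,8]$ and in particular not to exceed the bounds already furnished by the second family. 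Comparing the two cases then gives $\max\{T_3(1):f\in\mathcal T\}=8$ and $\min\{T_3(1):f\in\mathcal T\}=-8$, with the extremal functions identified ($k(z,1)$ or $k(z,-1)$ for the maximum, $F(z,1/2,1,-1)$ for the minimum).

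The only genuine obstacle is bookkeeping: one must be sure that Lemma \ref{p-lemma065} (attainment of extreme values of $T_3(1)$ on the boundary of $A_{2,3}$) applies, which in turn rests on checking that the relevant function of $(a_2,a_3)$ has no interior maximum/minimum of the wrong sign — this is the analogue of the critical-point discussion in the proof of Lemma \ref{p-lemma065}, and since $T_3(1)$ takes both signs on $A_{2,3}$ (compare Examples \ref{example-1} and \ref{example-2}) the reduction to the boundary is legitimate. Everything else is a routine one-variable calculus check on a quadratic in $\alpha$ and a quadratic in $t^2$, together with the comparison of the two resulting ranges.
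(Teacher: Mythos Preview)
Your proposal is correct and follows essentially the same route as the paper: reduce to the boundary of $A_{2,3}$ via Lemma~\ref{p-lemma065}, then optimize over the two one-parameter families from Lemma~\ref{p-lemma055}. Your hesitant computation is in fact right: with $u=t^2$ one gets $T_3(1)=16u^2-8u=8t^2(2t^2-1)$, which on $[0,1]$ ranges over $[-1,8]$ (critical point $u=1/4$, endpoints $0$ and $8$), and the second family gives $4(4\alpha-2)^2-8=8(8\alpha^2-8\alpha+1)\in[-8,8]$, so the comparison and extremal functions are exactly as the paper finds.
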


\begin{proof}
By Lemma \ref{p-lemma055}, it is enough to consider the functions $F(z,1,t,0)=k(z,t)$ and $F(z,\alpha,1,-1)$ with $0\le\alpha\le1$ and $-1\le t\le1$.
\bigskip

\textbf{Case 1.} For the function $F(z,1,t,0)=k(z,t)=z+2 t z^2+\left(4 t^2-1\right) z^3+\left(8 t^3-4 t\right) z^4+\cdots$, we have $T_3(1)=1-2a_2^2+2a_2^2a_3-a_3^2=8 t^2 \left(2 t^2-1\right):=\phi_1(t)$,
and it is easy to verify that
$$
\max_{-1\le t\le1} \phi_1(t)=\phi_1(-1)=8\quad\mbox{and}\quad \min_{-1\le t\le1} \phi_1(t)=\phi_1(-1/2)=-1.
$$

\textbf{Case 2.} For the function $F(z,\alpha,1,-1)=z+ (4 \alpha -2) z^2+3 z^3+(8 \alpha -4) z^4+\cdots$, we have $T_3(1)=8 \left(8 \alpha ^2-8 \alpha +1\right):=\psi_1(\alpha)$,
\bigskip

\noindent and it is again easy to verify that
$$
\max_{0\le\alpha\le1} \psi_1(\alpha)=\psi_1(0)=8\quad\mbox{and}\quad \min_{0\le\alpha\le1} \psi_1(\alpha)=\psi_1(1/2)=-8.
$$
The conclusion follows from Cases 1 and  2, and the maximum is attained for the function $F(z,1,-1,0)=k(z,-1)$, and the minimum is attained for the function $F(z,1/2,1,-1)$.
\end{proof}

\noindent\textbf{Acknowledgement:}
The authors thank Prof. K.-J. Wirths for useful discussion and suggestions.

\end{document}